\numberwithin{equation}{section}
\newtheorem{theorem}{Theorem}[section]
\newtheorem{definition}[theorem]{Definition}
\newtheorem{conjecture}[theorem]{Conjecture}
\newtheorem{lemma}[theorem]{Lemma}
\newtheorem{example}[theorem]{Example}
\newtheorem{fact}[theorem]{Fact}
\begin{document}
\baselineskip=16pt

\title{ The minimum spectral radius of graphs with a given domination number \footnote{Corresponding author: Jianping Li (lijianping65@nudt.edu.cn)}
}

\author{ Chang Liu, Jianping Li\\
		\small  College of Sciences, National University of Defense Technology, \\
			\small  Changsha, China, 410073.\\}


\date{\today}

\maketitle

\begin{abstract}
	Let $\mathbb{G}_{n,\gamma}$ be the set of simple and connected graphs on $n$ vertices and with domination number $\gamma$. The graph with minimum spectral radius among $\mathbb{G}_{n,\gamma}$ is called the minimizer graph. In this paper, we first prove that the minimizer graph of $\mathbb{G}_{n,\gamma}$ must be a tree. Moreover, for $\gamma\in\{1,2,3,\lceil\frac{n}{3}\rceil,\lfloor\frac{n}{2}\rfloor\}$, we characterize all minimizer graphs in $\mathbb{G}_{n,\gamma}$.
\end{abstract}

\textbf{AMS Classification:}  05C50; 05C69

\textbf{Key words:} Minimum spectral radius; Domination number; Minimizer graph 

\section{Introduction}
Throughout this paper, we only consider simple, finite, undirected and connected graphs. Let $G$ be a graph with vertex set $V(G)=\{v_1,v_2,\dots,v_n\}$ and edge set $E(G)$. A vertex set $D(G)$ (or $D$ for short) of a graph $G$ is said to a dominating set if every vertex in $V(G)\backslash D(G)$ is adjacent to a vertex in $D(G)$. The domination number of $G$, denoted by $\gamma(G)$ (or $\gamma$ for short), is the minimum cardinality of a dominating set, i.e., $\gamma(G)=\min_{D(G)\subseteq V(G)}\{|D(G)|\}$. A dominating set of $G$ of minimum cardinality is called a $\gamma(G)$-set. 

For a vertex $v\in V(G)$, the neighborhood of $v$ is the set $N_{G}(v)=\{u\in V(G)~|~uv\in E(G)\}$. The degree of $v\in V(G)$, denoted by $d_{G}(v)$, is defined as the number of neighbors of $v$ in $G$, i.e. $d_{G}(v)=|N_{G}(v)|$. A pendent vertex is a vertex of degree one. We say a path $P_{t}=u_1u_2\dots u_t$ ($t\geq 2$) is a pendent path if $u_1$ is a cut vertex; called $u_1$ the root of $P_t$. A vertex $v$ in a tree $T$ is called a support vertex if $v$ is a neighbor of a pendent vertex, while it is called a branching vertex if $d_{T}(G)\geq 3$. The distance between two vertex $u,v\in V(G)$, written $d_{G}(u,v)$, is the length of a shortest path between $u$ and $v$. For a set $S\subseteq V(G)$, we use $G-S$ to denote the graph obtained from $G$ by deleting all vertices in $S$ together with their incident edges.

The adjacency matrix of $G$ is $A(G)=(a_{ij})_{n\times n}$, where $a_{ij}=1$ if and only if $v_i v_j\in E(G)$ and $a_{ij}=0$ otherwise. The characteristic polynomial $\Phi(G,\lambda)=\det\left( \lambda I_n-A(G)\right) $ of the adjacency matrix $A(G)$ of $G$ is called the characteristic polynomial of $G$. Let $\lambda_1\geq\lambda_2\geq\dots\geq\lambda_n$ be eigenvalues of $A(G)$. The spectral radius $\rho(G)$ of $G$ is the largest eigenvalue of $A(G)$, i.e. $\rho(G)=\lambda_1$.


For a long time, many scholars have been interested in the relationships between spectral radius and other graph invariants (e.g., independence number \cite{Jiinde,Louinde,Xuinde}, matching number \cite{Mingmatch,Linmatch}, domination number \cite{Stevdomin,Guandiom}, diameter \cite{Guodia,Vandia}, degree sequence \cite{Liudegree}, the number of cut vertices \cite{Bercut}, the number of pendent vertices \cite{Wupend}, etc).  Particularly, the minimum spectral radius problem is in close relationship with Tur\'an-type problem. This prompted the authors of \cite{Louinde} and \cite{Xuinde} to look for the
graphs having the minimal spectral radius in the class of connected graphs with fixed independence number. In 2011,  Guan and Ye \cite{Guandiom} obtained the unique graph with the minimum spectral radius in the class of $n$-vertex graphs with domination number 2. Let $\mathbb{G}_{n,\gamma}$ be the set of simple and connected graphs with $n$ vertices and domination number $\gamma$. Throughout this paper, we say a graph is a minimizer graph if it attains the minimum spectral radius among $\mathbb{G}_{n,\gamma}$. In this paper, we first proved that the minimizer graph of $\mathbb{G}_{n,\gamma}$ must be a tree. Moreover, for $\gamma\in\{1,2,3,\lceil\frac{n}{3}\rceil,\lfloor\frac{n}{2}\rfloor\}$, we characterize all minimizer graphs in $\mathbb{G}_{n,\gamma}$.

\section{Preliminaries}
In this section, we list some known results which will be used in this paper.

\begin{lemma}\cite{Danlem1,Delalem1}\label{lemdomin}
	Let $D$ be a $\gamma(G)$-set of $G$. Then there exists a spanning tree $T$ of $G$ such that $D$ is a $\gamma(T)$-set. 
\end{lemma}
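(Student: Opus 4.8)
The plan is to reduce the statement to a purely combinatorial spanning-tree construction by first isolating the only inequality that goes the ``hard'' way. Observe that a spanning tree $T$ of $G$ is in particular a spanning subgraph, and that domination is monotone under edge deletion: if $D'$ is any dominating set of $T$, then for every $u\in V(G)\setminus D'$ the neighbor of $u$ in $D'$ provided by $T$ is also a neighbor in $G$ (because $E(T)\subseteq E(G)$), so $D'$ dominates $G$ as well. Hence $\gamma(G)\le\gamma(T)$ for every spanning tree $T$ of $G$. Consequently, it suffices to exhibit one spanning tree $T$ in which the given set $D$ is still a dominating set: this gives $\gamma(T)\le|D|=\gamma(G)\le\gamma(T)$, forcing $\gamma(T)=|D|$, so that $D$ is a $\gamma(T)$-set.

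So the whole task becomes building a spanning tree of $G$ that $D$ dominates. First I would construct a ``domination skeleton.'' Since $D$ is a dominating set of $G$, every vertex $u\in V(G)\setminus D$ has at least one neighbor in $D$; choose one such neighbor $g(u)\in D\cap N_G(u)$ and let $F$ be the subgraph with edge set $\{\,ug(u): u\in V(G)\setminus D\,\}$. Each edge of $F$ joins a vertex of $V(G)\setminus D$ to a vertex of $D$, and every vertex of $V(G)\setminus D$ is incident with exactly one edge of $F$. Thus $F$ is bipartite with parts $D$ and $V(G)\setminus D$, every non-$D$ vertex has degree one, and no edge joins two non-$D$ vertices; so each component of $F$ is a star centered at a vertex of $D$ (with some vertices of $D$ possibly isolated). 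In particular $F$ is a forest, and by construction every vertex outside $D$ is adjacent in $F$ to a vertex of $D$.

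Next I would enlarge $F$ to a spanning tree. Because $G$ is connected, any acyclic subgraph of $G$ extends to a spanning tree of $G$ by repeatedly adding edges of $G$ that join distinct components (the standard greedy/matroid completion of a forest). Applying this to $F$ yields a spanning tree $T$ of $G$ with $F\subseteq T$. Since adding edges can only enlarge neighborhoods, every vertex of $V(G)\setminus D$ that was adjacent to $D$ in $F$ remains adjacent to $D$ in $T$; hence $D$ dominates $T$. Combined with the reduction of the first paragraph, $D$ is a $\gamma(T)$-set, as required.

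The argument is short, and I do not anticipate a serious obstacle; the one point that must be handled with care is verifying that the skeleton $F$ is genuinely acyclic (so that it can legitimately be completed to a spanning tree) rather than merely a dominating subgraph. This follows from the observation that every edge of $F$ is incident with a degree-one vertex outside $D$, which no cycle can contain. The conceptual crux is really the monotonicity remark, which converts the demand ``$D$ is a \emph{minimum} dominating set of $T$'' into the far easier ``$D$ is \emph{a} dominating set of $T$.''
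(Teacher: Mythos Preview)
Your argument is correct. The monotonicity observation $\gamma(G)\le\gamma(T)$ for every spanning tree $T$ reduces the task to finding a spanning tree in which $D$ still dominates, and your star-skeleton $F$ followed by greedy completion does this cleanly; the acyclicity of $F$ is justified exactly as you say.

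As for comparison: the paper does not supply its own proof of this lemma. It is quoted from the references \cite{Danlem1,Delalem1} and used as a black box (together with Lemma~\ref{lemsub}) to obtain Theorem~\ref{themtree}. Your self-contained argument is in fact the standard one found in those sources, so there is nothing to contrast methodologically.
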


\begin{lemma}\cite{CveSpec}\label{lemsub}
	If $H$ is a subgraph of $G$, then $\rho(H)\leq\rho(G)$. In particular, if $H$ is proper, then $\rho(H)<\rho(G)$.
\end{lemma}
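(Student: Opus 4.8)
The plan is to derive both parts from the variational characterisation of the spectral radius of a symmetric matrix together with the Perron--Frobenius theorem. Recall that $A(G)$ is a real symmetric nonnegative matrix, so $\rho(G)=\max_{x\neq 0}\frac{x^{\top}A(G)x}{x^{\top}x}$, and, by the spectral decomposition, equality in $x^{\top}A(G)x\le\rho(G)\,x^{\top}x$ holds precisely when $x$ lies in the eigenspace of $\rho(G)$. Since the paper's standing convention makes $G$ connected, $A(G)$ is irreducible, so this eigenspace is one--dimensional and is spanned by a strictly positive vector (the Perron vector of $G$).

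For the first assertion, I would take a nonnegative Perron eigenvector $x\in\mathbb{R}^{V(H)}$ of $A(H)$ normalised by $x^{\top}x=1$, and extend it to $\widetilde{x}\in\mathbb{R}^{V(G)}$ by setting $\widetilde{x}_v=0$ for $v\in V(G)\setminus V(H)$. Because $E(H)\subseteq E(G)$ and $\widetilde{x}$ has nonnegative entries, every term dropped in passing from $E(G)$ to $E(H)$ is nonnegative, so
\[
\rho(G)\ \ge\ \widetilde{x}^{\top}A(G)\widetilde{x}\ =\ \sum_{uv\in E(G)}2\,\widetilde{x}_u\widetilde{x}_v\ \ge\ \sum_{uv\in E(H)}2\,x_ux_v\ =\ x^{\top}A(H)x\ =\ \rho(H).
\]

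For strictness when $H$ is a proper subgraph, I would argue by contradiction: if $\rho(H)=\rho(G)$, then the chain above collapses to equalities, so $\widetilde{x}^{\top}A(G)\widetilde{x}=\rho(G)\,\widetilde{x}^{\top}\widetilde{x}$, i.e. $\widetilde{x}$ is a $\rho(G)$--eigenvector of $A(G)$. By irreducibility $\widetilde{x}$ must be strictly positive, which forces $V(H)=V(G)$ and $x=\widetilde{x}>0$; since $H$ is proper we then have $E(H)\subsetneq E(G)$. Now $A(H)x=\rho(H)x=\rho(G)x=A(G)x$, hence $\big(A(G)-A(H)\big)x=0$, which is impossible because $A(G)-A(H)$ is a nonzero nonnegative matrix and $x>0$. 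Hence $\rho(H)<\rho(G)$. The only point that genuinely needs care is that strict monotonicity fails without connectedness of $G$ (for instance $K_3$ inside $K_3\cup K_1$), so it is essential to invoke the blanket connectedness assumption exactly at this step; everything else is routine linear algebra and I do not anticipate a real obstacle.
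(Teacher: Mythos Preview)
The paper does not give its own proof of this lemma: it is simply quoted from \cite{CveSpec} and used as a black box. So there is nothing to compare your argument against on the paper's side.

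Your proof is correct and is in fact the standard one. The Rayleigh--quotient inequality together with the nonnegative extension $\widetilde{x}$ gives $\rho(H)\le\rho(G)$ without any connectivity hypothesis on $H$ (only a nonnegative Perron vector of $A(H)$ is needed, which Perron--Frobenius supplies for any nonnegative matrix). For the strict inequality you correctly invoke the paper's blanket assumption that $G$ is connected, so that the $\rho(G)$--eigenspace is one--dimensional and spanned by a strictly positive vector; from there either your eigen-equation argument $(A(G)-A(H))x=0$ or the equivalent quadratic-form argument $x^{\top}(A(G)-A(H))x=0$ yields the contradiction. Your caveat about the necessity of connectedness of $G$ for the strict part is exactly right.
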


The following theorem can be easily proven by combining Lemma \ref{lemdomin} and Lemma \ref{lemsub}.
\begin{theorem}\label{themtree}
	Let $G$ be the minimizer graph over all graphs in $\mathbb{G}_{n,\gamma}$. Then $G$ must be a tree.
\end{theorem}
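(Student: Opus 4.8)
The plan is to start from an arbitrary minimizer graph $G \in \mathbb{G}_{n,\gamma}$ and produce from it a spanning tree $T$ of $G$ that still lies in $\mathbb{G}_{n,\gamma}$; then Lemma~\ref{lemsub} will force $G = T$. The natural tool is Lemma~\ref{lemdomin}: fix a $\gamma(G)$-set $D$ (so $|D| = \gamma$), and invoke Lemma~\ref{lemdomin} to obtain a spanning tree $T$ of $G$ in which $D$ is a $\gamma(T)$-set.

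The one subtlety to watch is that Lemma~\ref{lemdomin} only guarantees that $D$ is a dominating set of $T$ of minimum cardinality in $T$, i.e. that $\gamma(T) = |D| = \gamma$. One must also check that $T$ has exactly $n$ vertices (it does, being a spanning tree of the $n$-vertex graph $G$) and is connected (it is, being a tree). Hence $T \in \mathbb{G}_{n,\gamma}$. Since $T$ is a spanning subgraph of $G$, Lemma~\ref{lemsub} gives $\rho(T) \le \rho(G)$, and because $G$ is a minimizer we also have $\rho(G) \le \rho(T)$; equality in Lemma~\ref{lemsub} forces $T$ to be a non-proper subgraph, i.e. $E(T) = E(G)$, so $G = T$ is a tree.

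The main (and essentially only) obstacle is simply making sure the cardinality bookkeeping is airtight: a priori one might worry that passing to the spanning tree could \emph{decrease} the domination number, but Lemma~\ref{lemdomin} is precisely the statement that rules this out — it furnishes a tree in which $D$ remains \emph{minimum}, not merely dominating. Everything else is immediate from Lemma~\ref{lemsub}. I would present this as a three-line argument: extract $D$, apply Lemma~\ref{lemdomin} to get $T \in \mathbb{G}_{n,\gamma}$, apply Lemma~\ref{lemsub} together with minimality of $G$ to conclude $G = T$.
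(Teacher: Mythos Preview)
Your proposal is correct and matches the paper's approach exactly: the paper states that Theorem~\ref{themtree} ``can be easily proven by combining Lemma~\ref{lemdomin} and Lemma~\ref{lemsub},'' and your argument does precisely this, with the added care of spelling out why the strict-inequality clause of Lemma~\ref{lemsub} forces $G=T$.
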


\begin{lemma}\cite{CveSpec}\label{lemmintree}
	The path $P_n$ attains the minimum spectral radius among all connected $n$-vertex graphs and $\rho(P_n)=2\cos\left( \frac{\pi}{n+1}\right) $.
\end{lemma}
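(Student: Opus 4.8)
This is a classical result; I sketch the route I would follow. The assertion splits into (i) minimality of $P_n$ over all connected $n$-vertex graphs, and (ii) evaluation of the extremal value. For (i), first reduce to trees exactly as in Theorem~\ref{themtree}: if a connected graph $G$ on $n$ vertices is not a tree, then a spanning tree of $G$ is a proper subgraph, so $\rho$ strictly drops by Lemma~\ref{lemsub}; hence a minimizer is a tree and it suffices to prove $\rho(P_n)<\rho(T)$ for every $n$-vertex tree $T\not\cong P_n$. Here I would use the structural fact that a tree which is not a path has a vertex $v$ with $d_T(v)\ge 3$ carrying at least two pendent paths: suppressing all degree-$2$ vertices of $T$ produces a tree $R$ whose internal vertices are exactly the branching vertices of $T$ and which has no degree-$2$ vertex; a vertex $v$ that is a leaf of the subtree of $R$ spanned by the internal vertices of $R$ then has at most one internal neighbour in $R$, hence is adjacent in $R$ to at least $d_R(v)-1\ge 2$ leaves of $R$, and these correspond to two pendent paths of $T$ rooted at the common vertex $v$.

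Writing $T=G_{k,\l}$ for the graph obtained from a connected graph $G$ on at least two vertices (the rest of $T$, with $v$ retained) by rooting pendent paths of lengths $k\ge\l\ge 1$ at $v$, I would then invoke the Li--Feng type ``unbalancing'' lemma: $\rho(G_{k+1,\l-1})<\rho(G_{k,\l})$. Applying it $\l$ times turns the two pendent paths at $v$ into a single pendent path of length $k+\l$ while keeping the vertex number equal to $n$ and the graph a tree, each step strictly decreasing $\rho$. Since the detach-and-reattach operation changes degrees only at $v$ (down by $1$, from $\ge 3$) and at one former leaf (up by $1$, from $1$ to $2$), the positive integer $\sum_{u\in V(T)}d_T(u)^2$ strictly decreases at each stage, so the iteration terminates; and by the structural fact above it can only terminate at $P_n$. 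This chain of strict inequalities gives $\rho(T)>\rho(P_n)$, as wanted.

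For (ii), I would diagonalize $A(P_n)$ directly. An eigenvector $x=(x_1,\dots,x_n)$ for eigenvalue $\lambda$ satisfies $x_{j-1}+x_{j+1}=\lambda x_j$ for $1\le j\le n$ with the boundary convention $x_0=x_{n+1}=0$. The ansatz $x_j=\sin(j\theta)$ solves the recurrence with $\lambda=2\cos\theta$ and satisfies $x_0=0$ automatically, while $x_{n+1}=0$ forces $\theta\in\{k\pi/(n+1):k=1,\dots,n\}$; this yields $n$ distinct numbers $\lambda_k=2\cos\left(\frac{k\pi}{n+1}\right)$ with linearly independent (indeed orthogonal) eigenvectors, hence the whole spectrum of $A(P_n)$, the largest being $\lambda_1=2\cos\left(\frac{\pi}{n+1}\right)$. (Equivalently, use the recursion $\Phi(P_n,\lambda)=\lambda\,\Phi(P_{n-1},\lambda)-\Phi(P_{n-2},\lambda)$ and recognize $\Phi(P_n,\cdot)$ as a rescaled Chebyshev polynomial of the second kind, whose largest root is $2\cos\left(\frac{\pi}{n+1}\right)$.)

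The main obstacle is the unbalancing lemma inside step (i); its proof is standard but not a one-liner. One option is to expand the characteristic polynomials of $G_{k,\l}$ and $G_{k+1,\l-1}$ along their pendent paths, using the recursion for $\Phi$ of a graph with an attached pendent path, and show the resulting difference has a constant sign for $\lambda\ge\rho(G_{k,\l})$; another is a Perron--Frobenius argument, observing that the Perron entries decay sinusoidally along a pendent path and feeding this into the eigenvalue equation at $v$ to compare the Rayleigh quotient of the old Perron vector evaluated on the new graph with $\rho$. Everything else — the reduction to trees, the pendent-path structure of non-path trees, the termination bookkeeping, and the explicit diagonalization in (ii) — is routine.
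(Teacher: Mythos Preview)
The paper does not prove this lemma; it is quoted without argument as a classical result from \cite{CveSpec}. Your sketch is correct and follows the standard route: the reduction to trees via Lemma~\ref{lemsub}, the repeated use of the Li--Feng inequality (which is exactly Lemma~\ref{lemlong} here) to collapse any non-path tree down to $P_n$, and the sinusoidal diagonalization of $A(P_n)$ all go through. One small reading note on your termination bookkeeping: a single ``stage'' in which $\sum_u d_T(u)^2$ drops is the full merge of the two pendent paths at $v$ (all $\ell$ applications of Lemma~\ref{lemlong} taken together), not a single application---the degree at $v$ only decreases once the shorter path has been completely absorbed. With that understood, the monovariant argument is clean and the proof is complete.
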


\begin{lemma}\cite{BroSpec}\label{lemcalcu}
	Let $T$ be a tree, and $u,v\in V(G)$.
	\begin{itemize}
		\item[\textnormal{(\romannumeral1)}] Let $e=uv$ be an edge in $T$ that separates $T$ into two subtrees $A$ and $B$, with $u\in A$ and $v\in B$. Then $\Phi(T,x)=\Phi(A,x)\Phi(B,x)-\Phi(A-\{u\},x)\Phi(B-\{v\},x)$.
		\item[\textnormal{(\romannumeral2)}] Let $v$ be a vertex of $T$. Then
		$\Phi(T,x)=x\Phi(T-\{v\},x)-\sum_{uv\in E(T)}\Phi(T-\{u,v\},x)$.
	\end{itemize}
\end{lemma}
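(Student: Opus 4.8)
The plan is to prove both identities by reducing the characteristic polynomial of a tree to its matching polynomial and then classifying matchings. First I would establish the structural fact that for any forest $F$ on $n$ vertices,
\[
\Phi(F,x)=\sum_{k\geq 0}(-1)^k m_k(F)\, x^{n-2k},
\]
where $m_k(F)$ denotes the number of $k$-edge matchings of $F$ (with $m_0(F)=1$). To see this I would expand $\det(xI_n-A(F))$ as a signed sum over permutations $\sigma\in S_n$: a term is nonzero only if every non-fixed point of $\sigma$ lies on a directed cycle supported by edges of $F$. Since a forest contains no cycle of length at least $3$, the only such cycles are transpositions $(i\,j)$ with $ij\in E(F)$, so the non-fixed points of every contributing $\sigma$ form a matching $M$. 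Each transposition contributes a sign $-1$ and an entry product $(-A_{ij})(-A_{ji})=1$, while each of the $n-2|M|$ fixed points contributes a factor $x$; summing over all matchings yields the displayed formula. I would also record the elementary multiplicativity $\Phi(G_1\sqcup G_2,x)=\Phi(G_1,x)\Phi(G_2,x)$, immediate from the block-diagonal form of the adjacency matrix of a disjoint union.

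With this in hand, (ii) follows by splitting the matchings of $T$ according to whether the vertex $v$ is covered. The matchings avoiding $v$ are exactly the matchings of $T-\{v\}$; since $|V(T-\{v\})|=n-1$, their total contribution is $x\,\Phi(T-\{v\},x)$, the extra factor $x$ accounting for the uncovered vertex $v$. A matching covering $v$ uses a unique edge $uv$ with $uv\in E(T)$ together with an arbitrary matching of $T-\{u,v\}$; writing its size as $1+|M'|$, the sign $-1$ of the added edge turns its contribution into $-\Phi(T-\{u,v\},x)$, and summing over the neighbours $u$ of $v$ gives $-\sum_{uv\in E(T)}\Phi(T-\{u,v\},x)$. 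Adding the two cases produces (ii).

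For (i) I would argue in the same spirit, classifying the matchings of $T$ by whether they use the cut edge $e=uv$. A matching not using $e$ is a matching of $T-e=A\sqcup B$, and since $|V(A)|+|V(B)|=n$ the matching formula together with multiplicativity turns its contribution into $\Phi(A,x)\Phi(B,x)$. A matching using $e$ consists of $e$ together with a matching of $T-\{u,v\}=(A-\{u\})\sqcup(B-\{v\})$, contributing $-\Phi(A-\{u\},x)\Phi(B-\{v\},x)$ by the same sign-and-size bookkeeping; summing the two cases gives (i). Alternatively, once (ii) is available, (i) can be deduced purely formally by applying (ii) at the vertex $u$ in $T$, using $T-\{u\}=(A-\{u\})\sqcup B$ and multiplicativity to factor out $\Phi(B,x)$, and then recognising the remaining bracket as the expansion of $\Phi(A,x)$ given by (ii) applied inside $A$.

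I expect the only real obstacle to be the structural reduction in the first paragraph: one must argue carefully that acyclicity forces the contributing permutations to be products of transpositions along edges, and keep the signs and exponents of $x$ consistent. Once the matching formula and the multiplicativity of $\Phi$ over disjoint unions are secured, the two recursions are routine case splits, and the bookkeeping of the factor $x$ for uncovered vertices and of the sign $-1$ per matching edge is the only thing requiring care.
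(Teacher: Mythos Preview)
Your argument is correct. The reduction of $\Phi(F,x)$ to the matching polynomial for a forest $F$ via the permutation expansion of the determinant is the standard route, and both recursions then follow cleanly from the case splits you describe; the alternative derivation of (i) from (ii) by expanding at $u$ and factoring out $\Phi(B,x)$ is also valid.

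There is, however, nothing to compare against: the paper does not prove this lemma. It is quoted from Brouwer and Haemers, \emph{Spectra of Graphs}, and stated without argument. Your proof therefore supplies what the paper omits, and it coincides with the proof one finds in that reference (and in most texts): characteristic polynomial equals matching polynomial on forests, then combinatorial deletion at an edge or a vertex.
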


\begin{lemma}\cite{Lieig}\label{lemlong}
	Let $v\in V(G)$ and suppose that two new path $P_{s+1}^{(1)}=vv_1v_2\dots v_s$ and $P_{t+1}^{(2)}=vu_1u_2\dots u_t$ of length $s,t$ $(1\leq s\leq t)$ are attached to $G$ at $v$, respectively, to form a new graph $G_{s,t}$. Then $\rho(G_{s,t})>\rho(G_{s-1,t+1})$.
\end{lemma}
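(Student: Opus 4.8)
The plan is to compare the characteristic polynomials of $G_{s,t}$ and $G_{s-1,t+1}$ — note they have the same number of vertices, $|V(G)|+s+t$ — and to show that $\Phi(G_{s-1,t+1},x)>0$ for every $x\ge\rho(G_{s,t})$; since $\Phi(G_{s-1,t+1},\cdot)$ vanishes at $\rho(G_{s-1,t+1})$, this immediately forces $\rho(G_{s-1,t+1})<\rho(G_{s,t})$. Write $\phi_k(x)=\Phi(P_k,x)$ for the characteristic polynomial of the $k$-vertex path, with the conventions $\phi_0=1$, $\phi_{-1}=0$, so that $\phi_k=x\phi_{k-1}-\phi_{k-2}$. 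One may also assume $v$ has a neighbour in $G$; otherwise $G_{s,t}$ and $G_{s-1,t+1}$ are both the path $P_{|V(G)|+s+t}$ and there is nothing to prove. (An alternative route uses the Perron eigenvector and the Li--Feng edge-shifting lemma, but it requires comparing eigenvector entries along pendant paths and is less self-contained given the tools in Section~2, so I would use characteristic polynomials.)

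First I would record the pendant-path formula that follows from Lemma~\ref{lemcalcu}(i) by splitting off the edge joining the attachment vertex to the first vertex of the pendant path: attaching a pendant path of length $\ell$ to a graph $H$ at a vertex $w$ yields characteristic polynomial $\phi_\ell\,\Phi(H,x)-\phi_{\ell-1}\,\Phi(H-w,x)$. Applying this twice, once for each pendant path at $v$, gives
\[
\Phi(G_{s,t},x)=\Phi(G,x)\,\phi_s\phi_t-\Phi(G-v,x)\bigl(\phi_{s-1}\phi_t+\phi_s\phi_{t-1}\bigr),
\]
and the analogous expression for $G_{s-1,t+1}$. Subtracting, the coefficient of $\Phi(G,x)$ becomes $\phi_{s-1}\phi_{t+1}-\phi_s\phi_t$ and the coefficient of $-\Phi(G-v,x)$ becomes $\phi_{s-2}\phi_{t+1}-\phi_s\phi_{t-1}$. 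Using the substitution $x=2\cos\theta$, $\phi_k=\sin((k+1)\theta)/\sin\theta$ together with product-to-sum identities (equivalently, induction on $t-s$ via the three-term recurrence), these collapse to $\phi_{s-1}\phi_{t+1}-\phi_s\phi_t=-\phi_{t-s}$ and $\phi_{s-2}\phi_{t+1}-\phi_s\phi_{t-1}=-x\,\phi_{t-s}$. Hence
\[
\Phi(G_{s-1,t+1},x)-\Phi(G_{s,t},x)=\phi_{t-s}(x)\,\bigl(x\,\Phi(G-v,x)-\Phi(G,x)\bigr),
\]
and by Lemma~\ref{lemcalcu}(ii) applied to $G$ at $v$, the second factor equals $\sum_{uv\in E(G)}\Phi(G-\{u,v\},x)$.

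It remains to analyse signs for $x\ge\rho:=\rho(G_{s,t})$. When $t>s$, the path $P_{t-s}$ is a proper subgraph of $G_{s,t}$, so Lemma~\ref{lemsub} gives $\rho(P_{t-s})<\rho$ and thus $\phi_{t-s}(x)>0$ for $x\ge\rho$; when $t=s$ this is trivial since $\phi_0=1$. Also $G$ is a proper subgraph of $G_{s,t}$ (as $s,t\ge1$), so $\rho(G)<\rho$, whence for every edge $uv$ we have $\rho(G-\{u,v\})\le\rho(G)<\rho$ and therefore $\Phi(G-\{u,v\},x)>0$ for $x\ge\rho$; since $E(G)\neq\emptyset$ the whole sum is strictly positive there. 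Combined with $\Phi(G_{s,t},x)>0$ for $x>\rho$ and $\Phi(G_{s,t},\rho)=0$, the displayed identity yields $\Phi(G_{s-1,t+1},x)>0$ for every $x\ge\rho$. If we had $\rho(G_{s-1,t+1})\ge\rho$, then evaluating at $x=\rho(G_{s-1,t+1})$ would give $0=\Phi(G_{s-1,t+1},\rho(G_{s-1,t+1}))>0$, a contradiction; hence $\rho(G_{s-1,t+1})<\rho(G_{s,t})$. The only genuine computation — and the step most prone to sign errors — is establishing the two path-polynomial identities that collapse the difference of characteristic polynomials into the clean product form; once those are in hand, everything else is bookkeeping with Lemmas~\ref{lemsub} and~\ref{lemcalcu}.
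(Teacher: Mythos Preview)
The paper does not prove Lemma~\ref{lemlong}; it is quoted from Li--Feng \cite{Lieig} without argument, so there is no in-paper proof to compare against. Your characteristic-polynomial route is the standard one and is correct: the pendant-path decomposition, the Chebyshev-type identities $\phi_{s-1}\phi_{t+1}-\phi_s\phi_t=-\phi_{t-s}$ and $\phi_{s-2}\phi_{t+1}-\phi_s\phi_{t-1}=-x\,\phi_{t-s}$, and the resulting factorisation
\[
\Phi(G_{s-1,t+1},x)-\Phi(G_{s,t},x)=\phi_{t-s}(x)\bigl(x\,\Phi(G-v,x)-\Phi(G,x)\bigr)
\]
are all right, and the sign analysis via Lemma~\ref{lemsub} is clean.

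Two small caveats. First, Lemma~\ref{lemcalcu} as stated in this paper is for trees, whereas Lemma~\ref{lemlong} is stated for arbitrary $G$; for non-tree $G$ the Schwenk expansion at $v$ acquires extra cycle terms $2\sum_{C\ni v}\Phi(G-V(C),x)$ on the right, but each of these is again strictly positive for $x\ge\rho(G_{s,t})$, so your positivity argument survives verbatim. (The bridge version of Lemma~\ref{lemcalcu}(i) that you use for the pendant-path formula is valid for any graph, so that step needs no change.) Second, your aside that when $v$ has no neighbour in $G$ ``there is nothing to prove'' is slightly off: in that degenerate case $G_{s,t}\cong G_{s-1,t+1}$ and the \emph{strict} inequality actually fails, so the lemma tacitly assumes $d_G(v)\ge 1$ --- which holds in every use the paper makes of it.
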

\begin{figure}[htbp]
	\centering 
	\includegraphics[width=15cm]{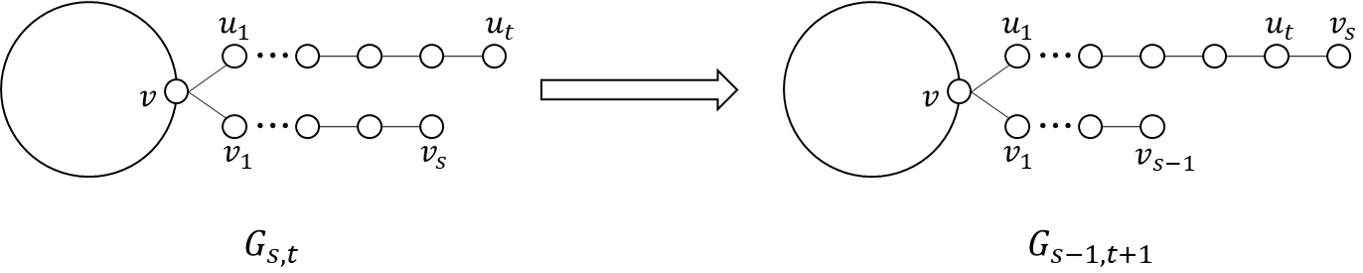}
	\caption{A graph transformation for Lemma \ref{lemlong}.}
\end{figure}

\begin{lemma}\cite{Wupend}\label{lemPero}
	Let $u,v\in V(G)$ be two distinct vertices, $\{w_i|i=1,2,\dots,s\}\subseteq N_{G}(v)\backslash N_{G}(u)$. Suppose that $\boldsymbol{x}=\{x_1,x_2,\dots,x_n\}$ is the Perron vector of $G$, where $x_i$ is corresponding to $w_i$ $(i=1,\dots,n)$. Let $G^*=G-vw_1-vw_2-\dots-vw_s+uw_1+uw_2+\dots+uw_s$. If $x_u\geq x_v$, then $\rho(G^*)>\rho(G)$.
\end{lemma}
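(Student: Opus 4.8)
The plan is to exploit the variational (Rayleigh--Ritz) characterization of the largest eigenvalue of a real symmetric matrix together with the strict positivity of the Perron vector. Write $A=A(G)$ and $A^{*}=A(G^{*})$, and normalize the Perron vector so that $\boldsymbol{x}^{T}\boldsymbol{x}=1$; since $G$ is connected, Perron--Frobenius guarantees $x_i>0$ for every $i$. Because $\boldsymbol{x}$ is the Perron vector of $G$ we have $\rho(G)=\boldsymbol{x}^{T}A\,\boldsymbol{x}$, while for the symmetric matrix $A^{*}$ the Rayleigh quotient supplies the general bound $\rho(G^{*})\ge\boldsymbol{x}^{T}A^{*}\boldsymbol{x}$, with equality forcing $\boldsymbol{x}$ to lie in the top eigenspace of $A^{*}$.

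First I would compute the change in the quadratic form. Since $A^{*}$ is obtained from $A$ by deleting the edges $vw_1,\dots,vw_s$ and inserting the edges $uw_1,\dots,uw_s$ (these are genuine modifications because each $w_i\in N_G(v)\setminus N_G(u)$), and since every edge $\{i,j\}$ contributes $2x_ix_j$ to the form, I obtain
\[
\boldsymbol{x}^{T}A^{*}\boldsymbol{x}-\boldsymbol{x}^{T}A\,\boldsymbol{x}
=2\sum_{i=1}^{s}\bigl(x_u x_{w_i}-x_v x_{w_i}\bigr)
=2\,(x_u-x_v)\sum_{i=1}^{s}x_{w_i}.
\]
As all entries of $\boldsymbol{x}$ are positive, $\sum_{i=1}^{s}x_{w_i}>0$, and the hypothesis $x_u\ge x_v$ yields $\boldsymbol{x}^{T}A^{*}\boldsymbol{x}\ge\boldsymbol{x}^{T}A\,\boldsymbol{x}=\rho(G)$. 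Together with the Rayleigh bound this already gives $\rho(G^{*})\ge\rho(G)$, and when $x_u>x_v$ the displayed quantity is strictly positive, so $\rho(G^{*})\ge\boldsymbol{x}^{T}A^{*}\boldsymbol{x}>\rho(G)$ and the conclusion follows immediately.

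The main obstacle is the boundary case $x_u=x_v$, where the Rayleigh bound only yields $\rho(G^{*})\ge\rho(G)$ and strictness must be extracted from the equality analysis. Here I would argue by contradiction: if $\rho(G^{*})=\rho(G)$, then $\boldsymbol{x}$ attains the maximum Rayleigh quotient of $A^{*}$, so by the equality case of the Rayleigh--Ritz theorem $\boldsymbol{x}$ is an eigenvector of $A^{*}$ for $\rho(G^{*})$. Hence $A^{*}\boldsymbol{x}=\rho(G)\boldsymbol{x}=A\boldsymbol{x}$, which forces $(A^{*}-A)\boldsymbol{x}=\boldsymbol{0}$. Reading off the coordinate indexed by $v$—whose only nonzero entries in $A^{*}-A$ are the $-1$'s in the columns $w_1,\dots,w_s$, the neighbors $v$ lost (the added edges $uw_i$ affect only the rows $u$ and $w_i$)—gives $-\sum_{i=1}^{s}x_{w_i}=0$, contradicting the positivity of the Perron vector. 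Therefore $\rho(G^{*})>\rho(G)$ in this case as well.

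I would close by remarking that the same $v$-coordinate contradiction actually settles both cases at once, so the split into $x_u>x_v$ and $x_u=x_v$ is only for expository convenience; the crux throughout is that the Perron vector of the connected graph $G$ has strictly positive entries, which is precisely what forbids $\boldsymbol{x}$ from persisting as an eigenvector once the edges incident to $v$ are stripped away. Note also that no connectivity assumption on $G^{*}$ is needed, since the Rayleigh--Ritz characterization and its equality case hold for arbitrary symmetric matrices.
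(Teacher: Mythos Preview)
Your argument is correct and is in fact the standard proof of this well-known edge-switching lemma: compare Rayleigh quotients using the Perron vector of $G$, and rule out equality by checking that $(A^{*}-A)\boldsymbol{x}$ has a nonzero coordinate at $v$. Note, however, that the paper does not supply its own proof of this lemma; it is quoted from \cite{Wupend} and used as a tool. So there is nothing in the paper to compare against beyond the citation, and your write-up simply reproduces the classical justification.
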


\begin{definition}
	An internal path of $G$ is a sequence of vertices $P=v_1v_2\dots v_t$ with $t\geq2$ such that:
		\begin{itemize}
		\item[\textnormal{(\romannumeral1)}] the vertices in the sequence are distinct $(\mbox{except possibly } v_1=v_t)$;			\item[\textnormal{(\romannumeral2)}] $v_i$ is adjacent to $v_{i+1}$ $(i=1,2,\dots,t-1)$;
		\item[\textnormal{(\romannumeral3)}] the two end-vertices of the path have degree strictly greater than 2, and the rest of the vertices with degree 2, i.e. $d_{G}(v_1)\geq3$, $d_{G}(v_t)\geq3$, $d_{G}(v_2)=d_{G}(v_3)=\dots=d_{G}(v_{t-1})=2$ $(\mbox{unless }t=2)$.
	\end{itemize}
\end{definition}

Let $IP(G)$ be the set of the internal paths of $G$. A once (twice) subdivision of a graph $G$, denoted by $G_{u_1}$ ($G_{u_1,u_2}$), is obtained from $G$ by inserting a vertex $u_1$ (two vertices $u_1,u_2$) of degree $2$, to an edge of $G$. 
In this paper, we define a once (twice) subdividing transformation graph of a graph $G$, denoted by $\dot{G}$ ($\ddot{G}$), is obtained from $G$ by once (twice) subdividing an internal edge of $G$, and simultaneously deleting another one vertex (two vertices) on some pendent paths of $G$ such that the order is left unchanged.
 
\begin{lemma}\cite{Hoff}\label{lemsubdiv}
	Suppose that $G\ncong W_n$ and $e=uv$ is an edge on an internal path of $G$. Let $G_w$ $(G_{w_1,w_2})$ be the graph obtained from $G$ by the once $(\mbox{twice})$ subdivision of the edge $uv$. Then $\rho(G_{w})<\rho(G)$ $(\rho(G_{w_1,w_2})<\rho(G))$.
\end{lemma}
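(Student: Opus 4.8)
The statement is the classical Hoffman--Smith theorem on subdivisions along internal paths, and the plan is to reprove it by tracking the characteristic polynomial under the substitution $x = z + z^{-1}$. Fix the internal path $v_1v_2\cdots v_t$ containing $e = uv$. Assume first $v_1 \neq v_t$, so that $e$ is a bridge; then $G = A \cup P \cup B$, where $P = a^{\ast}u_1\cdots u_{k-1}b^{\ast}$ is the path realizing the internal path (with $k\ge 1$ edges, $e$ one of them), $A$ is the connected branch attached at $a^{\ast}=v_1$ and $B$ the connected branch attached at $b^{\ast}=v_t$, and $d_A(a^{\ast})\ge 2$, $d_B(b^{\ast})\ge 2$ because $d_G(v_1),d_G(v_t)\ge 3$. (If $v_1=v_t$ the internal path is a cycle through a vertex of degree $\ge 3$; the argument is identical after replacing the bridge expansion below by the corresponding expansion for a pendant cycle, cf.\ \cite{Hoff}.) Since $G$ has an internal path it contains a cycle or a double broom (a path with two pendant edges at each end) as a subgraph, so $\rho(G)\ge 2$ by Lemma~\ref{lemsub}; and $\rho(G)=2$ would make $G$ a connected graph of spectral radius $2$ possessing an internal path, which forces $G\cong W_n$ (see \cite{CveSpec}), contrary to hypothesis. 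Hence $\rho:=\rho(G)>2$, and I write $\rho = z_0 + z_0^{-1}$ with $z_0>1$.

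Writing $\phi_A = \Phi(A,x)$, $\phi_A^- = \Phi(A-a^{\ast},x)$ (and $\phi_B,\phi_B^-$ analogously), and $p_j = \Phi(P_j,x)$ with the conventions $p_0=1$, $p_{-1}=0$, $p_{-2}=-1$, repeated application of Lemma~\ref{lemcalcu}(i)--(ii) along $P$ gives
\[
\Phi(G,x) \;=\; p_{k-1}\phi_A\phi_B \;-\; p_{k-2}\bigl(\phi_A^-\phi_B + \phi_A\phi_B^-\bigr) \;+\; p_{k-3}\phi_A^-\phi_B^- .
\]
Substituting $x = z+z^{-1}$, so that $(z-z^{-1})p_j = z^{j+1}-z^{-(j+1)}$, and collecting powers of $z$ turns this into
\[
(z-z^{-1})\,\Phi(G,x) \;=\; z^{k-2}\,\alpha(z)\beta(z) \;-\; z^{-(k-2)}\,\bar\alpha(z)\bar\beta(z),
\]
where $\alpha(z) = z\phi_A - \phi_A^-$, $\bar\alpha(z) = z^{-1}\phi_A - \phi_A^-$, and $\beta,\bar\beta$ are the analogues for $B$. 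The once-subdivided graph $G_w$ is the same configuration with $k$ replaced by $k+1$; subtracting $z$ times the identity for $G$ from that for $G_w$ and dividing by $(z-z^{-1})$ yields the recursion
\[
\Phi(G_w,x) \;=\; z\,\Phi(G,x) \;+\; z^{-(k-2)}\,\bar\alpha(z)\bar\beta(z), \qquad x = z+z^{-1}>2 .
\]

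It thus suffices to prove $\bar\alpha(z)\bar\beta(z)>0$ for all $x\ge\rho$: then $\Phi(G_w,x) = z\,\Phi(G,x) + z^{-(k-2)}\bar\alpha(z)\bar\beta(z)>0$ on all of $[\rho,\infty)$ (since $\Phi(G,x)\ge 0$ there and $z,z^{-(k-2)}>0$), and consequently the largest root $\rho(G_w)$ satisfies $\rho(G_w)<\rho(G)$. Now for $x\ge\rho$ one has $\phi_A=\Phi(A,x)>0$ by Lemma~\ref{lemsub}, and $\bar\alpha(z) = \phi_A\bigl(z^{-1}-R_A(x)\bigr)$ with $R_A(x) := \Phi(A-a^{\ast},x)/\Phi(A,x)$. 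For $x>\rho(A)$ this $R_A(x)$ equals $\sum_{\ell\ge 0} w_\ell\,x^{-\ell-1}$, where $w_\ell$ counts closed walks of length $\ell$ at $a^{\ast}$ in $A$; since $d_A(a^{\ast})\ge 2$, the branch $A$ contains the cherry $P_3$ rooted at its center as a subgraph, so a term-by-term comparison of walk counts gives $R_A(x)\ge \Phi(P_3-\text{center},x)/\Phi(P_3,x) = x/(x^2-2)$; and $x/(x^2-2)>z^{-1}$ for every $x>2$ (a one-variable inequality that squares down to $4>0$). Hence $z^{-1}-R_A(x)<0$, so $\bar\alpha(z)<0$, and symmetrically $\bar\beta(z)<0$, giving $\bar\alpha(z)\bar\beta(z)>0$, as needed. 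The twice-subdivided case follows by applying the once-subdivided case twice --- note $G_w\ncong W_n$, for otherwise $G\cong W_n$ --- or directly by the same computation with $k$ replaced by $k+2$. I expect the positivity step to be the only real obstacle: one must pin down the exceptional graph so as to guarantee $\rho(G)>2$, and, more delicately, upgrade the sign of $\Phi(G_w,\cdot)$ at the single point $\rho$ to strict positivity on the whole ray $[\rho,\infty)$; it is precisely the walk-count monotonicity of $R_A$, together with the comparison against the infinite-pendant-path value $z^{-1}$ (the fixed point of $t\mapsto(x-t)^{-1}$), that closes this gap.
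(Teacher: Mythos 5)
The paper does not prove this lemma: it is quoted from Hoffman and Smith \cite{Hoff} as a known result, so there is no internal proof to compare against. Taking your argument on its own terms, the core is sound in the case it actually treats. The expansion $\Phi(G,x)=p_{k-1}\phi_A\phi_B-p_{k-2}\bigl(\phi_A^-\phi_B+\phi_A\phi_B^-\bigr)+p_{k-3}\phi_A^-\phi_B^-$ checks out at $k=1,2$ and follows inductively from Lemma \ref{lemcalcu}; the identity $(z-z^{-1})\Phi(G,x)=z^{k-2}\alpha\beta-z^{-(k-2)}\bar\alpha\bar\beta$ and the recursion $\Phi(G_w,x)=z\Phi(G,x)+z^{-(k-2)}\bar\alpha\bar\beta$ are correct; and the positivity step, comparing the closed-walk generating function $R_A(x)$ with that of a rooted $P_3$ to get $R_A(x)\ge x/(x^2-2)>z^{-1}$ and hence $\bar\alpha,\bar\beta<0$, does give $\Phi(G_w,x)>0$ on all of $[\rho(G),\infty)$, including at $x=\rho(G)$ where $\Phi(G,\cdot)$ vanishes. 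The identification of the exceptional graph via the classification of connected graphs with spectral radius $2$ is also fine.

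As a proof of the lemma as stated (for arbitrary $G\ncong W_n$), however, there is a concrete gap: ``$v_1\neq v_t$, so that $e$ is a bridge'' is a non sequitur. If the internal path lies on a cycle --- e.g.\ $G$ a theta graph, where every edge lies on an internal path with distinct endpoints and none is a bridge --- then deleting the interior of $P$ does not separate $v_1$ from $v_t$, the branches $A$ and $B$ are not disjoint, and the whole expansion of $\Phi(G,x)$ on which the argument rests is unavailable. Together with the pendant-cycle case $v_1=v_t$, which you only gesture at, this means your proof covers exactly the situation where $e$ is a cut edge. That happens to be all this paper ever uses (the lemma is applied only to the trees $G^{\diamond}$, $G^{\diamond}_1$, etc.), but it is not the full Hoffman--Smith theorem; the non-bridge cases need either the cycle-corrected edge-deletion formula for characteristic polynomials or the original eigenvector argument of \cite{Hoff}.
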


\begin{figure}[htbp]
	\centering 
	\includegraphics[width=6cm]{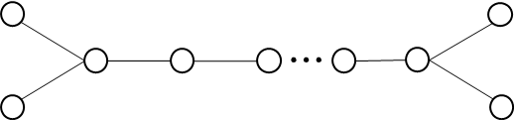}
	\caption{The graph $W_n$.}
\end{figure}

\begin{lemma}\cite{Linorder}\label{lemsymm}
	Let $G$ be a connected graph and $u,v\in V(G)$, where $G-u\cong G-v$. Suppose that $a$ new edges $\{uw_1,uw_2,\dots,uw_a\}$ are attached to $G$ at $u$, and $b$ new pendent edges $\{vw_{a+1},vw_{a+2},\dots,vw_{a+b}\}$ are attached to $G$ at $v$ to form a new graph $G_{a,b}$ $(a\geq b\geq 1)$. Then $\rho(G_{a+1,b-1})>\rho(G_{a,b})$. Specially, $\rho(G_{a+b,0})>\rho(G_{a+b-1,1})>\dots>\rho\left( G_{\lfloor\frac{a+b}{2}\rfloor,\lceil\frac{a+b}{2}\rceil}\right) $.
\end{lemma}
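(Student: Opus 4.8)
The plan is to compare $G_{a,b}$ and $G_{a+1,b-1}$ through their characteristic polynomials and then run the usual ``sign at the Perron root'' argument. As a preliminary I would record the effect of hanging pendant vertices on a fixed vertex: if $H'$ is obtained from a graph $H$ by attaching $k\ge 1$ new pendant vertices at a vertex $z$, then expanding $\Phi(H',x)$ along $z$ via Lemma \ref{lemcalcu}(\romannumeral2) and collapsing the isolated-vertex factors gives
\[
\Phi(H',x)=x^{k-1}\bigl(x\,\Phi(H,x)-k\,\Phi(H-\{z\},x)\bigr),
\]
a short routine computation (and the identity reads correctly for $k=0$ as well).

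Next I would build $G_{a,b}$ in two stages — attach the $a$ pendants at $u$, then the $b$ pendants at $v$, which is legitimate since $u\neq v$ — and apply the displayed formula twice, using that deleting $v$ from ``$G$ with $a$ pendants at $u$'' equals ``$(G-\{v\})$ with $a$ pendants at $u$''. This yields
\[
\Phi(G_{a,b},x)=x^{a+b-2}\bigl(x^{2}\Phi(G,x)-ax\,\Phi(G-\{u\},x)-bx\,\Phi(G-\{v\},x)+ab\,\Phi(G-\{u,v\},x)\bigr).
\]
Here is the one place the hypothesis is used: $G-u\cong G-v$ gives $\Phi(G-\{u\},x)=\Phi(G-\{v\},x)=:\psi(x)$, so with $s:=a+b$ and $\chi(x):=\Phi(G-\{u,v\},x)$,
\[
\Phi(G_{a,b},x)=x^{s-2}\bigl(x^{2}\Phi(G,x)-sx\,\psi(x)+ab\,\chi(x)\bigr);
\]
in particular $\Phi(G_{a,b},x)$ depends on the pair $(a,b)$ only through $s=a+b$ and the product $ab$, so $\rho(G_{a,b})=\rho(G_{b,a})$.

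Passing from $(a,b)$ to $(a+1,b-1)$ leaves $s$ fixed and changes $ab$ to $(a+1)(b-1)=ab-(a-b+1)$, hence
\[
\Phi(G_{a+1,b-1},x)-\Phi(G_{a,b},x)=-(a-b+1)\,x^{\,s-2}\,\chi(x).
\]
Put $\rho_0:=\rho(G_{a,b})$, so $\Phi(G_{a,b},\rho_0)=0$ and therefore $\Phi(G_{a+1,b-1},\rho_0)=-(a-b+1)\rho_0^{\,s-2}\chi(\rho_0)$. Since $a\ge b\ge1$ we have $a-b+1\ge1$, $s\ge2$, and $\rho_0>0$, so it only remains to show $\chi(\rho_0)>0$. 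The graph $G-\{u,v\}$ is a proper subgraph of the connected graph $G_{a,b}$ (if $|V(G)|=2$ it is empty and $\chi\equiv1$), so Lemma \ref{lemsub} gives $\rho(G-\{u,v\})<\rho_0$; as every eigenvalue of $G-\{u,v\}$ lies in $[-\rho(G-\{u,v\}),\rho(G-\{u,v\})]$, the number $\rho_0$ exceeds all roots of $\chi$, whence $\chi(\rho_0)>0$. Thus $\Phi(G_{a+1,b-1},\rho_0)<0$, while $\Phi(G_{a+1,b-1},x)>0$ for all $x$ above its largest root $\rho(G_{a+1,b-1})$; this forces $\rho(G_{a+1,b-1})>\rho_0$. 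The ``Specially'' chain then follows by iterating this inequality along $(a,b)=(s-1,1),(s-2,2),\dots$, the endpoint being writable as $G_{\lfloor s/2\rfloor,\lceil s/2\rceil}$ because $\rho(G_{a,b})=\rho(G_{b,a})$.

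I do not expect a serious obstacle: the algebra in the two-stage expansion is the part most prone to slips (one must check that the asymmetric $a$- and $b$-terms collapse precisely because $\Phi(G-\{u\})=\Phi(G-\{v\})$), and the only genuinely structural input is the inequality $\chi(\rho_0)>0$, i.e. that $\rho(G_{a,b})$ dominates the \emph{entire} spectrum of $G-\{u,v\}$ — which connectivity of $G_{a,b}$ plus Lemma \ref{lemsub} (Perron--Frobenius) delivers. One could instead try to move a single pendant edge from $v$ to $u$ and invoke Lemma \ref{lemPero}, but that requires $x_u\ge x_v$ in the Perron vector of $G_{a,b}$, which is not obviously available since $u$ and $v$ need not be interchanged by an automorphism of $G$ (they may be merely pseudo-similar); the characteristic-polynomial route avoids this issue entirely.
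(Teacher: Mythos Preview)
The paper does not supply a proof of this lemma at all; it is simply quoted from \cite{Linorder}. So there is nothing in the paper to compare against, and your argument stands on its own.

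Your proof is correct. The characteristic-polynomial identity
\[
\Phi(G_{a,b},x)=x^{\,a+b-2}\bigl(x^{2}\Phi(G,x)-(a+b)x\,\psi(x)+ab\,\chi(x)\bigr)
\]
is right, the difference $\Phi(G_{a+1,b-1},x)-\Phi(G_{a,b},x)=-(a-b+1)x^{s-2}\chi(x)$ follows, and the sign analysis at $\rho_0$ via $\chi(\rho_0)>0$ (from Lemma~\ref{lemsub} and monicity of $\chi$) is exactly the right way to finish. Your observation that the formula depends only on $a+b$ and $ab$, hence $\rho(G_{a,b})=\rho(G_{b,a})$, cleanly handles the floor/ceiling at the end of the chain.

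One citation caveat: Lemma~\ref{lemcalcu} in this paper is stated only for trees, so invoking part~(\romannumeral2) to expand $\Phi(H',x)$ \emph{along $z$} is not literally licensed when $G$ (and hence $H$, $H'$) may contain cycles through $z$. The content is of course fine --- the pendant-attachment identity $\Phi(H',x)=x^{k-1}\bigl(x\Phi(H,x)-k\Phi(H-\{z\},x)\bigr)$ holds for arbitrary graphs --- but to stay within the paper's toolkit you should expand along one of the \emph{new pendant vertices} $w_i$ (degree~$1$, hence in no cycle, so the tree-type recursion $\Phi(H'_k,x)=x\,\Phi(H'_{k-1},x)-x^{k-1}\Phi(H-\{z\},x)$ is immediate from cofactor expansion) and induct on $k$. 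This yields the same formula without appealing to anything beyond what the paper states.

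Your closing remark is also well taken: trying to push a pendant from $v$ to $u$ via Lemma~\ref{lemPero} would require $x_u\ge x_v$ in the Perron vector of $G_{a,b}$, and the hypothesis $G-u\cong G-v$ (pseudo-similarity, not necessarily an automorphism swapping $u$ and $v$) does not give that. The polynomial route sidesteps the issue entirely.
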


\begin{figure}[htbp]
	\centering 
	\includegraphics[width=16.5cm]{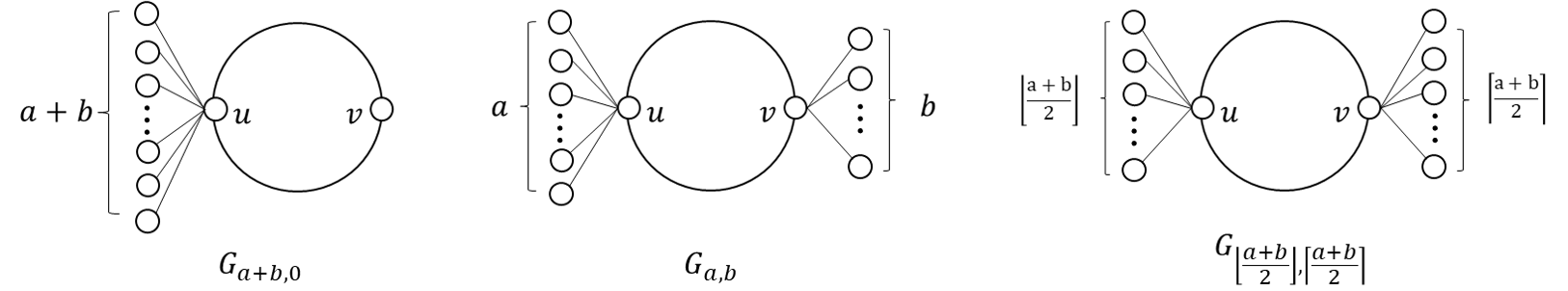}
	\caption{Three graphs for Lemma \ref{lemsymm}.}
\end{figure}

\begin{fact}\label{factset}
	For a tree $T$, there exists a minimum dominating set $D$ of $T$ such that all support vertices of $T$ are in $D$.
\end{fact}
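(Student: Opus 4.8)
The plan is a standard exchange argument. Among all minimum dominating sets of $T$, I would choose one, say $D$, that contains as many support vertices as possible, and then show this forces $D$ to contain every support vertex.

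Suppose not; then some support vertex $v$ satisfies $v\notin D$. Let $u$ be a pendent neighbor of $v$. Since $T$ is a tree, $v$ is the unique neighbor of $u$, so the requirement that $D$ dominate $u$ together with $v\notin D$ forces $u\in D$; more generally, every pendent neighbor of $v$ lies in $D$. Let $S$ be the set of pendent neighbors of $v$ and put $D'=(D\setminus S)\cup\{v\}$. I would check that $D'$ is again a dominating set: the vertex $v$ dominates itself and all of its neighbors, which in particular covers every vertex of $S$; and any vertex outside $D'$ that is not in $S$ still lies outside $D$, so it retains a dominator in $D$, and that dominator cannot belong to $S$ (a vertex of $S$ is a leaf whose only neighbor is $v\notin D$), hence it lies in $D\setminus S\subseteq D'$. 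So $D'$ dominates $T$.

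Comparing cardinalities, $|D'|=|D|-|S|+1\le|D|$, and since $D$ is minimum this must be an equality; thus $|S|=1$ and $D'$ is again a $\gamma(T)$-set. Because $u$ is a pendent vertex it is not a support vertex — the only tree in which a leaf is a support vertex is $P_2$, which is trivial and may be handled directly — so $D'$ contains every support vertex contained in $D$ together with the new support vertex $v$, contradicting the maximality of $D$. Hence all support vertices of $T$ lie in $D$.

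The only point that needs genuine care is the verification that $D'$ still dominates $T$ and that deleting $S$ removes no support vertex; the rest is bookkeeping. It is exactly the tree hypothesis that makes the leaf $u$ have the single neighbor $v$, which is what drives the exchange.
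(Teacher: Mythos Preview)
Your argument is correct and is essentially the same exchange as in the paper: for a support vertex $v\notin D$, replace its pendent neighbors in $D$ by $v$ itself. The only difference is packaging---you pick a minimum dominating set containing the maximum number of support vertices and derive a contradiction, whereas the paper treats one missing support vertex and implicitly iterates; your version also spells out more carefully why $D'$ remains dominating and why the removed leaf is not itself a support vertex.
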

\begin{proof}
	Let $D$ be a minimum dominating set of $T$. Suppose that $u$ is a unique support vertex of $T$ satisfying $u\in V(G)\backslash D$ and $\{v_1,v_2,\dots,v_k\}\subseteq N_{T}(u)$ are pendent vertices, where $k\geq1$. Hence, $\{v_1,v_2,\dots,v_k\}\subseteq D$. Then we have $D'=\left( D\backslash \{v_1,v_2,\dots,v_k\}\right) \cup\{u\}$ is a dominating set of $T$. If $k>1$, it follows that $|D'|<|D|$, a contradiction. If $k=1$, we obtain a minimum dominating set $D'$ of $T$ such that all support vertex of $T$ are in $D'$.
\end{proof}

\begin{fact}\label{factdistance}
	Let $D$ be a minimum dominating set of $G$, then $\min\limits_{\{u,v\}\subseteq D}d_{G}(u,v)\leq3$.
\end{fact}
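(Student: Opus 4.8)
The plan is to prove this purely combinatorially, by contradiction; no spectral input is needed. We may assume $\gamma=|D|\ge 2$, since for $\gamma=1$ there is no pair of vertices of $D$ to consider. So suppose, for contradiction, that $d_G(u,v)\ge 4$ for every pair of distinct vertices $u,v\in D$.

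First I would fix two distinct vertices $u,v\in D$ and, using the connectedness of $G$, choose a shortest $u$--$v$ path $u=w_0w_1w_2\cdots w_d=v$ with $d=d_G(u,v)\ge 4$; in particular $w_2$ exists. Since a subpath of a geodesic is again a geodesic, $d_G(u,w_2)=2$, so $u$ is not adjacent to $w_2$. Now, because $D$ dominates $G$, there is a vertex $z\in D$ with $z=w_2$ or $z$ adjacent to $w_2$.

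The last step is to eliminate both possibilities for $z$. If $z=w_2$, then $w_2\in D$ and $d_G(u,w_2)=2<4$, contradicting our assumption. If instead $z$ is adjacent to $w_2$ and $z\ne w_2$, then $z\ne u$ (else $u$ would be adjacent to $w_2$), and the triangle inequality gives $d_G(u,z)\le d_G(u,w_2)+d_G(w_2,z)=2+1=3<4$, again contradicting the assumption, since $u$ and $z$ are distinct members of $D$. Either way we reach a contradiction, which proves the claim. I do not expect any real obstacle here; the only points requiring a little care are the vacuous case $\gamma=1$ and checking that $w_2$ genuinely exists, which is guaranteed by $d\ge 4$. (An essentially equivalent route is to observe directly that any vertex at distance $2$ from a fixed $u\in D$ must be dominated by a vertex of $D\backslash\{u\}$, but routing the argument through an explicit geodesic makes the existence of such a vertex self-evident.)
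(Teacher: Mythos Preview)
Your proof is correct and follows essentially the same route as the paper: assume all pairs in $D$ are at distance at least $4$, take a geodesic between two vertices of $D$, and observe that the vertex $w_2$ two steps along this geodesic must be dominated by some $z\in D$ at distance at most $3$ from $u$, yielding a contradiction. The paper picks $u,v\in D$ realizing the minimum distance $t$ (so the internal vertices $w_1,\dots,w_{t-1}$ are automatically outside $D$) and then notes $N_G(w_i)\cap D=\varnothing$ for $2\le i\le t-2$; your version fixes an arbitrary pair and explicitly handles the two subcases $z=w_2$ and $z\sim w_2$, which is a slightly more careful write-up of the same idea.
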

\begin{proof}
	Suppose that $\min\limits_{\{u,v\}\subseteq D}d_{G}(u,v)=t\geq4$. There exists a path $P_{t+1}=uw_1w_2\dots w_{t-1}v$ between $u$ and $v$ in $G$ such that $\{u,v\}\subseteq D$ and $\{w_1,w_2,\dots,w_{t-1}\}\subseteq V(G)\backslash D$. Thus, we have $N_{G}(w_i)\cap D=\varnothing$, for $i=2,\dots,t-2$, a contradiction.
\end{proof}
	
	Let $D$ be a minimum dominating set of $G$.  For a vertex $u\in D$ , we use $N_{\overline{D}}(u)$ to denote the neighbors of $u$ in non-dominating set $V(G)\backslash D$.
	
\begin{fact}\label{factdomin}
	Let $D=\{v_1,v_2,\dots,v_{\gamma}\}$ be a minimum dominating set of $G$. If $1\leq\gamma<\lceil\frac{n}{3}\rceil$, there must be a vertex $v\in D$ such that $|N_{\overline{D}}(v)|\geq 3$.
\end{fact}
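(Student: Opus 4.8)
The plan is to argue by contradiction using a simple double count of the vertices lying outside $D$. Suppose, for the sake of contradiction, that $|N_{\overline{D}}(v)|\leq 2$ for every $v\in D$. Since $D$ is a dominating set of $G$, each of the $n-\gamma$ vertices of $V(G)\setminus D$ has at least one neighbour in $D$; equivalently, the family $\{N_{\overline{D}}(v) : v\in D\}$ covers $V(G)\setminus D$. Counting (with multiplicity) the ends in $V(G)\setminus D$ of the edges between $D$ and $V(G)\setminus D$ then yields
\[
n-\gamma=|V(G)\setminus D|\leq\sum_{v\in D}|N_{\overline{D}}(v)|\leq 2\gamma .
\]

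From $n-\gamma\leq 2\gamma$ we obtain $n\leq 3\gamma$, i.e. $\gamma\geq n/3$; since $\gamma$ is a positive integer this forces $\gamma\geq\lceil n/3\rceil$, contradicting the hypothesis $1\leq\gamma<\lceil n/3\rceil$. Hence some $v\in D$ must satisfy $|N_{\overline{D}}(v)|\geq 3$, as claimed. (Note the hypothesis $\gamma<\lceil n/3\rceil\leq n$ also guarantees $V(G)\setminus D\neq\varnothing$, so the count above is non-vacuous.)

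The argument is short and essentially the only place requiring a moment's care — and the closest thing to an obstacle — is the final implication: passing from the real inequality $n\leq 3\gamma$ to the integer statement $\gamma\geq\lceil n/3\rceil$. This is immediate from integrality of $\gamma$, but it is precisely where the ceiling in the statement originates, so it is worth writing out rather than skipping. Everything else is just the observation that the neighbourhoods into $V(G)\setminus D$ of the vertices of a dominating set must cover all non-dominating vertices, together with the pigeonhole bound coming from the assumed cap of $2$.
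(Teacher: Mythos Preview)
Your proof is correct and follows essentially the same approach as the paper: assume to the contrary that every $v\in D$ has $|N_{\overline{D}}(v)|\le 2$, use that $D$ dominates to bound $|V(G)\setminus D|\le 2\gamma$, and obtain $n\le 3\gamma$, contradicting $\gamma<\lceil n/3\rceil$. The only cosmetic difference is in the last step, where the paper writes $n\le 3\gamma\le 3\lceil n/3\rceil-3<n$ while you pass directly to $\gamma\ge\lceil n/3\rceil$ via integrality; both are equivalent.
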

\begin{proof}
	Assume the contrary that $|N_{\overline{D}}(v_i)|\leq2$ for $i=1,2,\dots,\gamma$. It can be checked that $|V(G)\backslash D|=|\bigcup_{i=1}^{\gamma}N_{\overline{D}}(v_i)|\leq \sum_{i=1}^{\gamma}|N_{\overline{D}}(v_i)|\leq 2\gamma$. $|V(G)|=|V(G)\backslash D|+|D|=3\gamma\leq 3\lceil\frac{n}{3}\rceil-3<n$, a contradiction.
\end{proof}
 
 We now introduce some graph transformations. Let $G$ be a tree and let $D$
 be a minimum dominating set of $G$. For convenience, we denote $B(G)$ the set of branching vertices of a tree $G$. Suppose that $u\in B(G)$ is a branching vertex attached some pendent paths $P_{l_i+1}^{(i)} = uv_{i,1}\dots v_{i,l_i}$ ($1\leq i \leq k$) and $l_1=\max_{1\leq i\leq k}l_i$. For $u\in D$, we define the once $Tr_1$-transformation of $G$, written $G'$, as $G'=G-v_{2,1}v_{2,2}-\dots-v_{k,1}v_{k,2}+v_{1,l_1}v_{2,2}+\dots+v_{1,l_1}v_{k,2}$ (see Figure \ref{graphtrans} (a)). For $u\in V(G)\backslash D$, we define the once $Tr_2$-transformation of $G$, denoted by $G''$, as $G''=G-uv_{2,1}-\dots-uv_{k,1}+v_{1,l_1}v_{2,1}+\dots+v_{1,l_1}v_{k,1}$ (see Figure \ref{graphtrans} (b)).
 
 \begin{figure}[htbp]
 	\centering
 	\subfigure[The once $Tr_1$-transformation]{
 		\includegraphics[width=7.6cm]{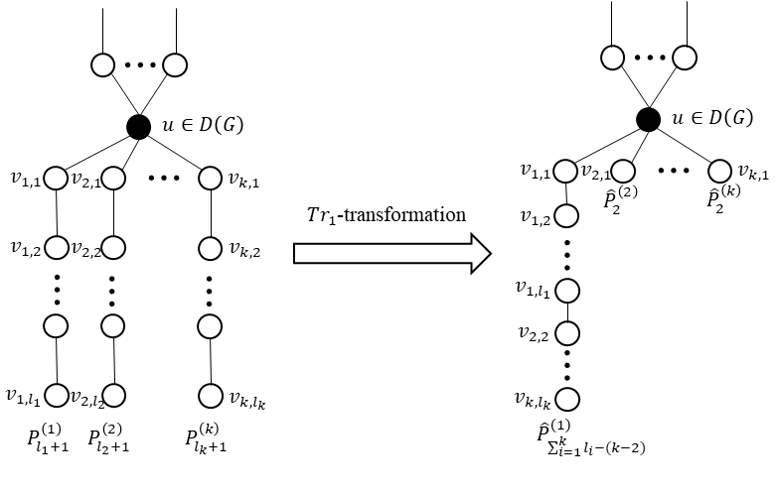}
 	}
 	\quad
 	\subfigure[The once $Tr_2$-transformation]{
 		\includegraphics[width=7.6cm]{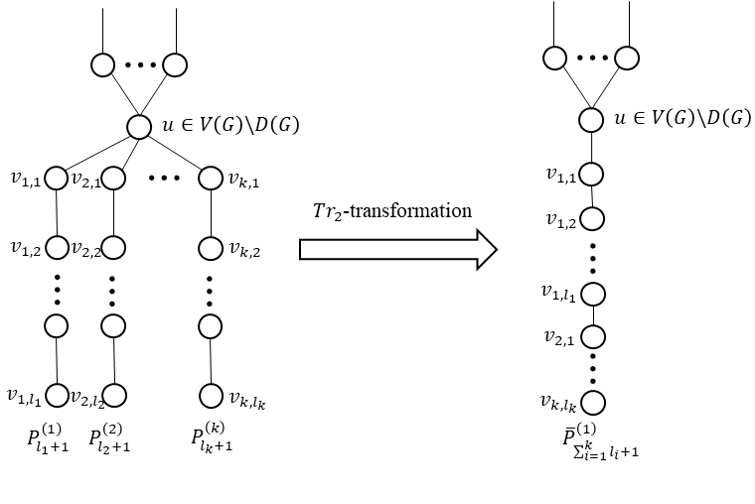}
 	}
 	\caption{Two graph transformations.}
 	\label{graphtrans}
 \end{figure}
 
%
 
	We first repeat the $Tr_2$-transformation until all branching vertices in $V(G)\backslash D(G)$ are traversed, and then we repeat the $Tr_1$-transformation until all branching vertices in $D(G)$ are traversed. Finally, a new tree $G^{\diamond}$ is obtained.  It follows from Lemma \ref{lemlong} that $\rho(G)\geq\rho(G^{\diamond})$ with equality holding if and only if $G\cong G^{\diamond}$. Note that these graph transformations may lead to $\gamma(G^{\diamond})\ne \gamma(G)$.
 
    
     Let $\mathcal{P}_1$ be the set containing all pendent paths of $G^{\diamond}$ whose roots belong to $B(G^{\diamond})\cap D(G)$, and let $\mathcal{P}_2$ be the set containing all pendent paths of $G^{\diamond}$ whose roots belong to $B(G^{\diamond})\cap \left( V(G)\backslash D(G)\right) $ (see Figure \ref{pathset}). Then, we give the following theorem. We use $P_{G^{\diamond}}(u,v)$ to denote the path between $u$ and $v$ of $G^{\diamond}$, where $u,v\in V(G^{\diamond})$. Note that $G^{\diamond}$ is a tree, the path $P_{G^{\diamond}}(u,v)$ is unique.
  
     \begin{figure}[htbp]
    	\centering
    	\subfigure[The pendent path set $\mathcal{P}_1$.]{
    		\includegraphics[width=7.8cm]{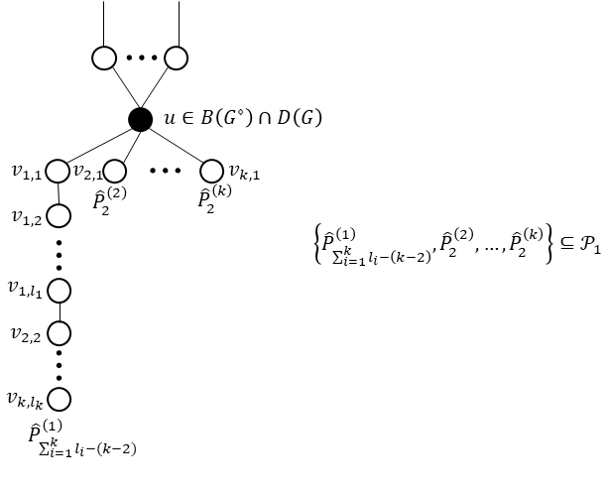}
    	}
    	\quad
    	\subfigure[The pendent path set $\mathcal{P}_2$.]{
    		\includegraphics[width=5cm]{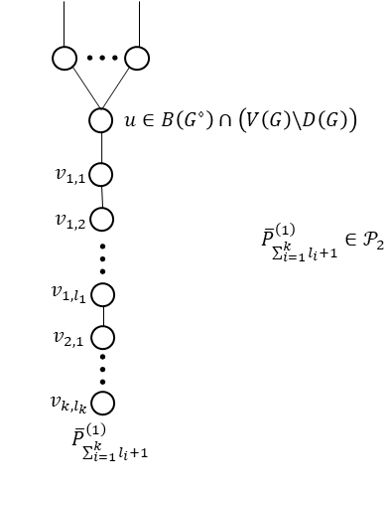}
    	}
    	\caption{Two sets of pendent paths of $G^{\diamond}$.}
    	\label{pathset}
    \end{figure}
%

\begin{theorem}\label{themdistance}
	Let $G$ be the minimizer graph in $\mathbb{G}_{n,\gamma}$ $\left( 1\leq\gamma<\lceil\frac{n}{3}\rceil\right)$ and let $D(G)$ be a minimum dominating set of $G$, then $\min\limits_{\{u,v\}\subseteq D(G)}d_{G}(u,v)=3$. 
\end{theorem}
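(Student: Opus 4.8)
The plan is to argue by contradiction. By Theorem~\ref{themtree} the minimizer $G$ is a tree, and by Fact~\ref{factdistance} we already have $t:=\min_{\{u,v\}\subseteq D(G)}d_G(u,v)\le 3$, so only the values $1$ and $2$ must be ruled out (when $\gamma=1$ the statement is vacuous, so take $\gamma\ge 2$). I may assume, via Fact~\ref{factset} and minimality of $D(G)$, that $D(G)$ contains every support vertex and hence no leaf; and by Fact~\ref{factdomin} --- this is where the hypothesis $1\le\gamma<\lceil n/3\rceil$ enters --- $D(G)$ contains a branching vertex $w$ with $|N_{\overline{D(G)}}(w)|\ge 3$, so in particular $G$ is not a path. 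Suppose $t\in\{1,2\}$, fix $u,v\in D(G)$ at distance $t$, and write $Q=P_G(u,v)$; the goal is to construct an $n$-vertex connected graph in $\mathbb{G}_{n,\gamma}$ with spectral radius strictly below $\rho(G)$.

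The first step is to normalise the shape of $G$ by the transformations introduced before the theorem. Running $Tr_2$ and then $Tr_1$ on the branching vertices that do not lie on $Q$ produces a tree $G^{\diamond}$ with $\rho(G^{\diamond})\le\rho(G)$, equality iff $G\cong G^{\diamond}$, by Lemma~\ref{lemlong}. Since each such move only re-attaches a detached pendent-path segment to the far end of a longest pendent path at the same branching vertex, a minimum dominating set of $G$ containing all support vertices can be transported to one of the same cardinality in the new tree, and conversely; hence $\gamma$ is unchanged, $G^{\diamond}\in\mathbb{G}_{n,\gamma}$, and so $G\cong G^{\diamond}$ by minimality. After this reduction every branching vertex of $G$ carries the configuration of Figure~\ref{pathset} (one long pendent path together with some short ones, the roots lying in $D(G)$ giving the paths of $\mathcal P_1$ and the others those of $\mathcal P_2$), and I claim that the relevant edge of $Q$ then lies on an internal path of $G$: the edge $uv$ if $t=1$, or, writing $Q=uxv$ with $x\notin D(G)$, the edge $ux$ if $t=2$. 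Establishing this amounts to showing that in the minimizer no pair of dominators sits at distance $\le 2$ along a pendent path, which is a short separate argument combining the $Tr$-reduction with Lemmas~\ref{lemlong} and~\ref{lemsubdiv}.

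The decisive step is then the surgery ``subdivide that edge of $Q$, then delete a pendent vertex elsewhere''. Subdividing the edge strictly lowers $\rho$ by Lemma~\ref{lemsubdiv} --- one has $G\ncong W_n$ because $\gamma(W_n)$ falls outside the admissible range, the finitely many remaining small graphs being checked by hand --- and, crucially, it leaves the domination number unchanged: the new subdivision vertex is adjacent to $u\in D(G)$, so $D(G)$ still dominates the subdivided graph and $\gamma(\text{subdivided }G)\le\gamma$, while contracting the subdivision vertex of any dominating set back onto $u$ gives the reverse inequality $\gamma\le\gamma(\text{subdivided }G)$. To restore the order $n$ I delete a pendent vertex $f$, chosen --- using the slack provided by $\gamma<\lceil n/3\rceil$ through the vertex $w$ of Fact~\ref{factdomin} --- so that $\gamma(G-f)=\gamma$ and so that the edge of $Q$ stays internal; this lowers $\rho$ once more by Lemma~\ref{lemsub}, and together with the subdivision argument the resulting tree still has domination number $\gamma$. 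Hence it lies in $\mathbb{G}_{n,\gamma}$ with spectral radius strictly below $\rho(G)$, contradicting the choice of $G$. I expect the main difficulty to be precisely the domination bookkeeping in this last step --- verifying that subdividing an edge between two close dominators never raises the domination number, and that a compensating pendent-vertex deletion preserving the domination number is always available --- and it is exactly here that the inequality $\gamma<\lceil n/3\rceil$ is indispensable.
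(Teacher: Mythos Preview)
Your overall architecture --- contradiction via the $Tr$-transformations followed by a subdivide/delete surgery --- is in the spirit of the paper's proof, but there is a genuine gap at the first normalisation step. You assert that $Tr_1$ and $Tr_2$ preserve the domination number and deduce $G\cong G^{\diamond}$; the paper explicitly warns that ``these graph transformations may lead to $\gamma(G^{\diamond})\ne \gamma(G)$'', and in fact $\gamma$ can strictly drop under $Tr_1$. Take a branching vertex $u\in D$ carrying two pendent paths of length~$2$, $u\!-\!v_{1,1}\!-\!v_{1,2}$ and $u\!-\!v_{2,1}\!-\!v_{2,2}$. The support vertices $v_{1,1},v_{2,1}$ must lie in $D$, so locally $D$ uses the three vertices $u,v_{1,1},v_{2,1}$. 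After $Tr_1$ one gets $u\!-\!v_{2,1}$ (leaf) together with $u\!-\!v_{1,1}\!-\!v_{1,2}\!-\!v_{2,2}$, and now the pair $\{u,v_{1,2}\}$ already dominates the whole local piece. Embedding this configuration away from your chosen path $Q$ (e.g.\ joining $u$ by a $P_4$ to an adjacent pair $p,q\in D$, each equipped with its own pendant leaf) yields a concrete tree with $\gamma(G^{\diamond})=\gamma(G)-1$. Your ``and conversely'' transport of dominating sets therefore fails, $G^{\diamond}$ need not belong to $\mathbb{G}_{n,\gamma}$, and the conclusion $G\cong G^{\diamond}$ is unjustified.

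This is precisely why the paper does \emph{not} attempt to keep $\gamma$ invariant step by step. It pushes the transformations through Steps~1--3 all the way to an explicit target tree $G^{*}$ (dominators spaced at distance~$3$ along a spine, $\mathcal P_2=\varnothing$, every pendent path in $\mathcal P_1$ of length~$1$), and only then checks, via Fact~\ref{factset} and a direct count, that $\gamma(G^{*})=\gamma$. Your single ``subdivide one edge, delete one leaf'' move cannot substitute for this global reconstruction: even granting that the subdivision itself preserves $\gamma$ (your argument there is sound), you still need a leaf $f$ whose removal preserves both $\gamma$ and the internality of the edge, and Fact~\ref{factdomin} only supplies a dominator $w$ with three non-dominator \emph{neighbours}, not three \emph{leaves} --- it is exactly the (non-$\gamma$-preserving) $Tr$-steps followed by the Step~3 clean-up that force the leaves to appear. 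Finally, the claim that $\gamma(W_n)$ always lies outside the range $[1,\lceil n/3\rceil)$ is not correct for every $n$, so the exclusion of $W_n$ also needs a separate argument.
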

\begin{proof}
	Let $G$ be the minimizer graph in $\mathbb{G}_{n,\gamma}$, and let $D(G)$ be a minimum dominating set of $G$ containing all support vertices of $G$. It follows from Theorem \ref{themtree} that $G$ is a tree.  Assume to the contrary that $\min_{\{u,v\}\subseteq D(G)}d_{G}(u,v)\ne3$.  By Fact \ref{factdistance}, we have $\min_{\{u,v\}\subseteq D(G)}d_{G}(u,v)=1$ or $\min_{\{u,v\}\subseteq D(G)}d_{G}(u,v)=2$. Through the following steps, we will construct a minimizer graph in $\mathbb{G}_{n,\gamma}$ ($1\leq\gamma<\lceil\frac{n}{3}\rceil$).
	
	\textbf{Step 1.} By applying the above transformations to $G$, we obtain a tree $G^{\diamond}$. Obviously, $\rho(G)\geq\rho(G^{\diamond})$, with equality holding if and only if $G\cong G^{\diamond}$.
	
	\textbf{Step 2.} If there exist two vertices $u,v\in D(G)$ such that $d_{G^{\diamond}}(u,v)=1$ and $e=uv\subseteq \tilde{P}$, where $\tilde{P}$ is some path in $IP(G^{\diamond})$. we apply the twice subdividing transformation to $G^{\diamond}$ such that $d_{\ddot{G^{\diamond}}}(u,v)=3$. Repeat this transformation, a new tree $G^{\diamond}_1$ will be obtained finally. It follows that
	\begin{equation*}
		2\leq \min_{\substack{\{u,v\}\subseteq D(G),\\ P_{G^{\diamond}_1}(u,v)\subseteq \tilde{P}^{(1)}}}d_{G^{\diamond}_1}(u,v)\leq 3,
	\end{equation*}
	where $\tilde{P}^{(1)}$ is some path in $IP(G^{\diamond}_1)$.
	
	If there exists two vertices $u',v'\in D(G)$ such that  $d_{G^{\diamond}_1}(u',v')=2$ and $P_{G^{\diamond}_1}(u',v')\subseteq \tilde{P}^{(1)}\in IP(G^{\diamond}_1)$, we utilize the once subdividing transformation on $G^{\diamond}_1$ such that $d_{\dot{G^{\diamond}_1}}(u',v')=3$. Repeat this transformation, finally, we constructed a new tree $G^{\diamond}_2$. It can be checked that 
	\begin{equation*}
		\min_{\substack{\{u,v\}\subseteq D(G) \\ P_{G^{\diamond}_2}(u,v)\subseteq \tilde{P}^{(2)}}}d_{G^{\diamond}_2}(u,v)= 3,
	\end{equation*}
	where  $\tilde{P}^{(2)}$ is some path in $IP(G^{\diamond}_2)$.
	
	When applying the subdividing transformation, we choose the pendent path in the following sequence and remove its vertices:
	
	(a) the pendent path in $\mathcal{P}_2$,
	
	(b) the pendent path in $\mathcal{P}_1$ with length more than 1,
	
	(c) the pendent path in $\mathcal{P}_1$ with length $1$.
	
	\begin{figure}[htbp]
		\centering 
		\includegraphics[width=10cm]{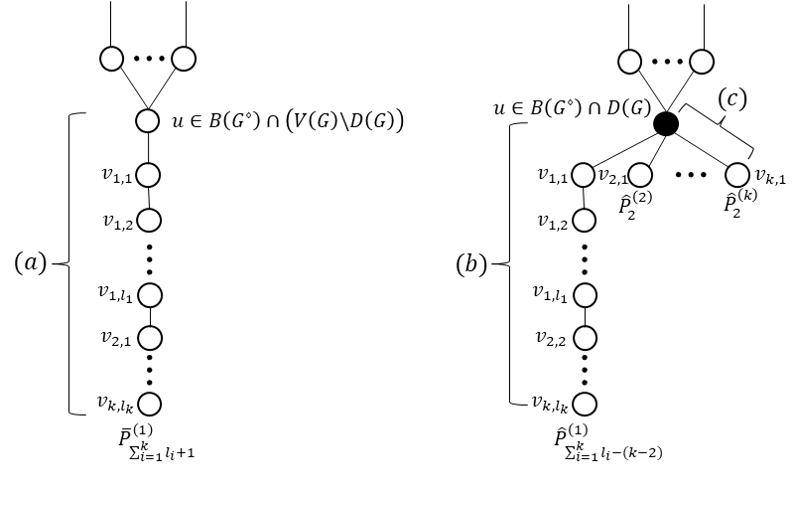}
		\caption{The sequence of choosing pendent paths.}
	\end{figure}
	
	 From Lemma \ref{lemsubdiv}, it's easy to see that $\rho(G^{\diamond})\geq\rho(G^{\diamond}_2)$ with equality holding if and only if $G^{\diamond}\cong G^{\diamond}_2$.
	
	\textbf{Step 3.} From Fact \ref{factdomin}, one can see that even though $\min_{\{u,v\}\subseteq D(G)}d_{G}(u,v)=3$ in a tree $G\in \mathbb{G}_{n,\gamma}$, there will still be at least one vertex $v\in D(G)$, such that $d_{G}(v)=|N_{\overline{D(G)}}(v)|\geq 3$, implying that $v$ is a branching vertex of $G$.
	
	If $G^{\diamond}_2$ has exactly one branching vertex, we denote $v$ the only branching vertex of $G^{\diamond}_2$. Pick the longest pendent path $P'$ of $G^{\diamond}_2$ attached to $v$, and move the terminal vertex from other pendent paths attached to $v$ to $P'$ repeatedly. Until the tree $G^{*}_1$ and the longest pendent path $P''$ attached to $v$ in $G^{*}_1$ are constructed, satisfying all vertices in $N_{G^{*}_1}(u)-V(P'')$ are pendent vertices, $d_{G^{*}_1}(v)=|N_{\overline{D(G^{*}_1)}}(v)|=n-3(\gamma-1)-1$ and $|V(P'')|=3(\gamma-1)+2$.
	
	Combining Fact \ref{factset}, we get that there exists a minimum dominating set $D(G^{*}_1)$ of $G^{*}_1$ satisfying $|D(G^{*}_1)|=\gamma$ and $\min_{\{u,v\}\subseteq D(G^{*}_1)}=3$.  By Lemma \ref{lemlong}, we have $\rho(G^{\diamond}_2)>\rho(G^{*}_1)$ with equality holding if and only if $G^{\diamond}_2\cong G^{*}_1$.

	If $G^{\diamond}_2$ has at least two branching vertices, we pick an internal path $P_{m}=w_1w_2\dots w_m$ ($m\geq2$) of $G^{\diamond}_2$, and apply the subdividing transformation to an edge of the internal path between $w_1$ and $w_m$ repeatedly (Select the pendent paths as the sequence shown in Step 2 and remove their vertices). Let $\{u_1,\dots,u_s\}$ be roots of remaining pendent paths in $\mathcal{P}_2$. Finally, a new tree $G^{*}_2$ is obtained, satisfying $\mathcal{P}_2=\varnothing$, all pendent paths in $\mathcal{P}_1$ have length 1, and $\sum_{i=1}^{s}d_{G^{*}_2}(u_i)= \sum_{i=1}^{s}|N_{\overline{D(G^{*}_2)}}(u_i)|=n-3(\gamma-s)-s$.
	\begin{figure}[htbp]
		\centering 
		\includegraphics[width=15cm]{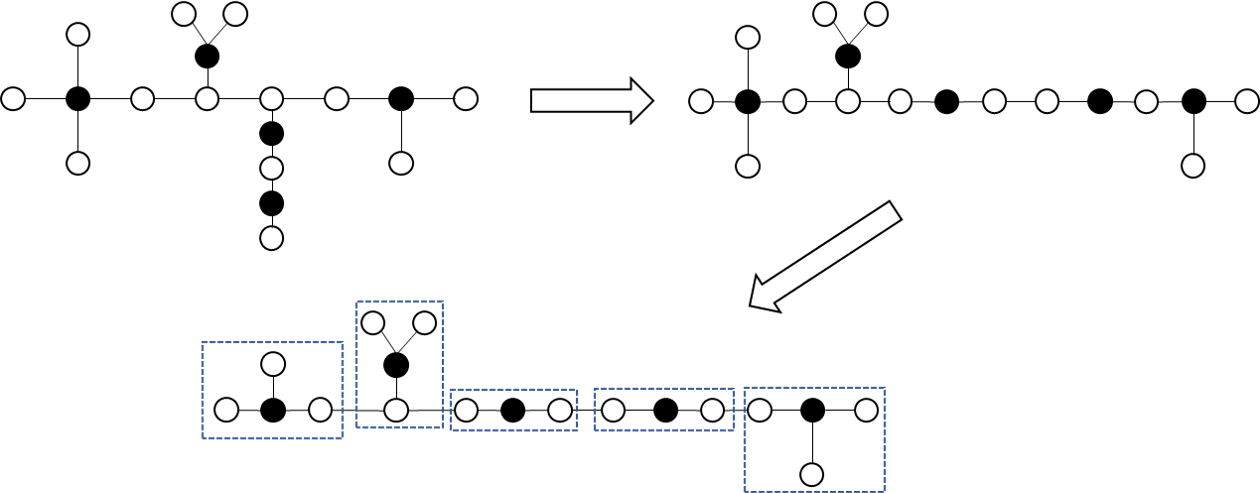}
		\caption{Three trees with 18 vertices and domination number 5.}
	\end{figure}

	By Fact \ref{factset}, one can see that there exists a minimum dominating set $D(G^{*}_2)$ of $G^{*}_2$ satisfying $|D(G^{*}_2)|=\gamma$ and $\min_{\{u,v\}\subseteq D(G^{*}_2)}=3$. Combining Lemma \ref{lemlong} and Lemma \ref{lemsubdiv}, we have $\rho(G^{\diamond}_2)\geq\rho(G^{*}_2)$ with equality holding if and only if $G^{\diamond}_2\cong G^{*}_2$.
	
	Notice that the above equalities do not hold at the same time. Given the above, for a tree $G\in \mathbb{G}_{n,\gamma}$ with $1\leq\gamma\leq \lceil\frac{n}{3}\rceil$, if $\min_{\{u,v\}\subseteq D(G)}d_{G}(u,v)\ne 3$, we can always construct a new tree $G^{*}$ based on $G$ such that $G^{*}\in\mathbb{G}_{n,\gamma}$, $\min_{\{u,v\}\subseteq D(G^{*})}d_{G^{*}}(u,v)= 3$ and $\rho(G)>\rho(G^{*})$, a contradiction.
	
	This completes the proof.
\end{proof}

\section{The minimizer graphs in $\mathbb{G}_{n,\gamma}$ for $\gamma\in\{1,2,\lceil\frac{n}{3}\rceil\}$}

 We use $S_n$ and $P_n$ to denote the $n$-vertex star and the $n$-vertex path, respectively. Let $H_{\lfloor\frac{n}{2}\rfloor,\lceil\frac{n}{2}\rceil}$ be a graph obtained from a path $v_1v_2v_3v_4$ by attaching $\lfloor\frac{n}{2}\rfloor-2$ pendent edges to $v_1$ and attaching $\lceil\frac{n}{2}\rceil-2$ pendent edges to $v_4$.
 
 	\begin{figure}[htbp]
 		\centering 
 		\includegraphics[width=8cm]{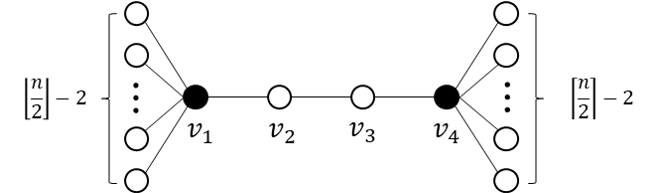}
 		\caption{The graph $H_{\lfloor\frac{n}{2}\rfloor,\lceil\frac{n}{2}\rceil}$.}
 	\end{figure}

 In \cite{Guandiom}, Guan and Ye have characterized the minimizer graph $H_{\lfloor\frac{n}{2}\rfloor,\lceil\frac{n}{2}\rceil}$ in $\mathbb{G}_{n,2}$. In this paper, we can directly draw this conclusion based on Lemma \ref{lemsymm} and Theorem \ref{themdistance}.

\begin{theorem}
	Let $G$ be the minimizer graph with domination number $\gamma$ in $\mathbb{G}_{n,}$, then
	\begin{equation*}
		G\cong \begin{cases}
			S_n & \gamma=1,\\
			H_{\lfloor\frac{n}{2}\rfloor,\lceil\frac{n}{2}\rceil} & \gamma=2, \\
			P_n & \gamma = \lceil\frac{n}{3}\rceil.
		\end{cases}
	\end{equation*}
\end{theorem}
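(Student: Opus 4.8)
The plan is to handle each of the three values of $\gamma$ separately, in each case using Theorem \ref{themtree} to restrict attention to trees and then invoking the structural constraints on minimizers. For $\gamma = 1$: a graph has domination number $1$ precisely when it has a vertex adjacent to all others, so among trees this forces $G \cong S_n$, since $S_n$ is the only tree with a universal vertex. (One could also note $S_n$ is the unique tree of diameter $\le 2$.) There is essentially nothing to prove beyond this observation, so $\gamma = 1$ is immediate.

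For $\gamma = \lceil n/3 \rceil$: by Lemma \ref{lemmintree}, $P_n$ is the unique connected graph of minimum spectral radius among all $n$-vertex connected graphs, so it suffices to check that $\gamma(P_n) = \lceil n/3 \rceil$ and that $P_n \in \mathbb{G}_{n,\gamma}$ with this $\gamma$. The domination number of a path on $n$ vertices is a standard fact: taking every third vertex starting from the second gives a dominating set of size $\lceil n/3 \rceil$, and a counting argument (each vertex of a dominating set dominates at most $3$ vertices of $P_n$) shows this is optimal. Hence $P_n$ lies in $\mathbb{G}_{n,\lceil n/3 \rceil}$ and, having the globally minimum spectral radius, is the minimizer there; uniqueness also follows from Lemma \ref{lemmintree}.

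For $\gamma = 2$: this is the substantive case and I would argue as follows. By Theorem \ref{themtree} the minimizer $G$ is a tree, and by Theorem \ref{themdistance} (applicable since $2 < \lceil n/3 \rceil$ for $n \ge 7$; small cases handled directly) any two vertices of a minimum dominating set $D = \{u,v\}$ satisfy $d_G(u,v) = 3$. So $G$ contains a path $u w_1 w_2 v$ with $\{u,v\}$ dominating. Since $D$ has size $2$ and $d(u,v)=3$, every other vertex is a neighbor of $u$ or of $v$; in a tree this means $G$ is obtained from the path $u w_1 w_2 v$ by attaching pendent edges at $u$ and at $v$ (no longer pendent paths, else $D$ would fail to dominate or $\gamma$ would increase). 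Writing $a$ and $b$ for the numbers of pendent edges at $u$ and $v$ respectively with $a + b = n - 4$, the graph is exactly the family to which Lemma \ref{lemsymm} applies, with the base graph $P_4$ and its two degree-symmetric internal vertices. Lemma \ref{lemsymm} then shows the spectral radius is minimized when $\{a+2, b+2\}$ are as equal as possible, i.e.\ $\{a+2,b+2\} = \{\lfloor n/2 \rfloor, \lceil n/2 \rceil\}$, giving $G \cong H_{\lfloor n/2 \rfloor, \lceil n/2 \rceil}$.

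The main obstacle is the $\gamma = 2$ structural step: one must rule out that pendent \emph{paths} of length $\ge 2$ (rather than pendent edges) hang off $u$ or $v$, and that $w_1$ or $w_2$ themselves carry extra branches. This is where Theorem \ref{themdistance} does the real work — if a pendent path of length $2$ were attached at $u$, its endpoint would be at distance $2$ from $u$ and undominated unless it is adjacent to a dominating vertex, forcing $\gamma \ge 3$; similarly a branch at $w_1$ creates a vertex at distance $\ge 2$ from both $u$ and $v$. Once the shape is pinned down to the two-star-joined-by-$P_4$ family, Lemma \ref{lemsymm} closes the argument cleanly, including uniqueness since the inequalities in that lemma are strict whenever $a \ne b$.
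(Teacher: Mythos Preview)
Your proposal is correct and follows exactly the route the paper indicates. The paper gives no detailed proof of this theorem; it simply remarks that the $\gamma=2$ case follows ``directly'' from Lemma~\ref{lemsymm} and Theorem~\ref{themdistance}, and leaves $\gamma=1$ and $\gamma=\lceil n/3\rceil$ implicit. You have filled in precisely those details: the $\gamma=1$ case via the observation that $S_n$ is the unique tree with a universal vertex (combined with Theorem~\ref{themtree}); the $\gamma=\lceil n/3\rceil$ case via Lemma~\ref{lemmintree} and the standard computation of $\gamma(P_n)$; and the $\gamma=2$ case via Theorem~\ref{themdistance} to force the double-broom shape on $P_4$, followed by Lemma~\ref{lemsymm} to balance the two ends. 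One small wording slip: in applying Lemma~\ref{lemsymm} you refer to the ``two degree-symmetric internal vertices'' of $P_4$, but the pendent edges are attached at the \emph{endpoints} $v_1,v_4$ (where indeed $P_4-v_1\cong P_4-v_4$), not at the internal vertices; the argument itself is unaffected.
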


\section{The minimizer graph in $\mathbb{G}_{n,3}$}

In this section, we determine graphs with the minimum spectral radius in $\mathbb{G}_{n,3}$. Let $G_1(a_1,b_1,c_1)$ be a graph obtained from a path $v_1v_4v_5v_6v_3$ by attaching $a_1$ pendent edges to $v_1$, attaching $c_1$ pendent edges to $v_3$ and joining $v_5$ with the centre $v_2$ of a $(b_1+1)$-vertex star, where $a_1+b_1+c_1=n-6$.  Let $G_2(a_2,b_2,c_2)$ be a graph obtained from a path $v_1v_4v_5v_2v_6v_7v_3$ by attaching $a_2$ pendent edges to $v_1$, $b_2$ pendent edges to $v_2$ and $c_2$ pendent edges to $v_3$, where $a_2+b_2+c_2=n-7$.
First, we prove several necessary lemmas.
\begin{figure}[htbp]
	\centering 
	\includegraphics[width=12.8cm]{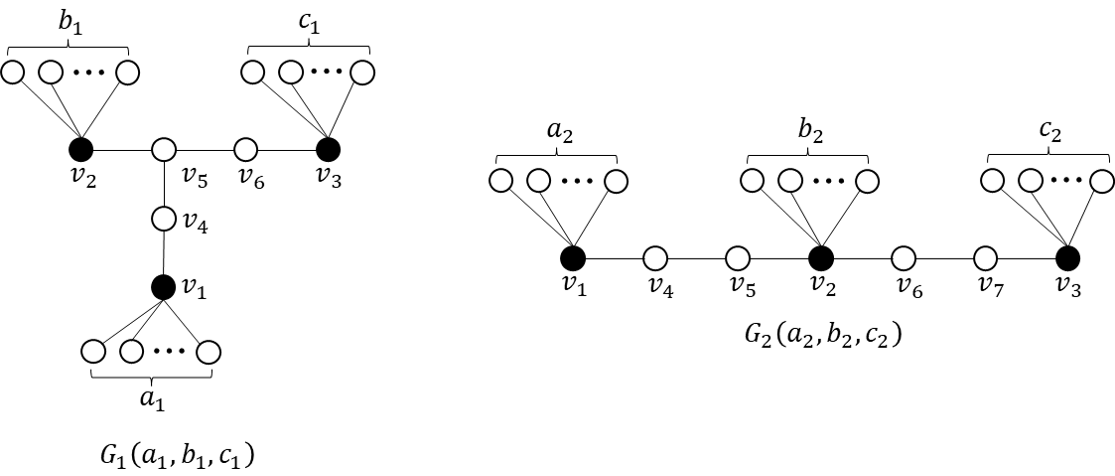}
	\caption{The graphs $G_1(a_1,b_1,c_1)$ and $G_2(a_2,b_2,c_2)$.}
\end{figure}

\begin{lemma}\label{lemform} 
	If $G$ is the minimizer graph in $\mathbb{G}_{n,3}$ $\left( n\geq11\right)$, then $G$ is isomorphic to some graph $G_2(a,b,c)$ with $a+b+c=n-7$.
\end{lemma}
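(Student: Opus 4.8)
The plan is to extract the skeleton of the minimizer from Theorems \ref{themtree} and \ref{themdistance} together with the support‑vertex structure of leaves, reducing the problem to exactly two families of trees, and then to eliminate the ``spider'' family by a spectral comparison.

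By Theorem \ref{themtree}, $G$ is a tree, and by Fact \ref{factset} we may fix a minimum dominating set $D=\{v_1,v_2,v_3\}$ of $G$ containing all support vertices of $G$. Since $n\ge 11$ forces $3<\lceil\frac{n}{3}\rceil$, Theorem \ref{themdistance} gives $d_G(v_i,v_j)\ge 3$ for all $i\ne j$. The decisive elementary observation is that every leaf of $G$ is adjacent to one of $v_1,v_2,v_3$, because the neighbour of a leaf is a support vertex and hence lies in $D$; in particular no $v_i$ is itself a leaf, since otherwise its unique neighbour would be a support vertex in $D$ at distance $1$ from $v_i$. Let $G^{-}$ be the tree obtained from $G$ by deleting all of its leaves. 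Then $v_1,v_2,v_3\in V(G^{-})$; every leaf of $G^{-}$ lies in $\{v_1,v_2,v_3\}$ (a leaf of $G^{-}$ not in $D$ would be a support vertex of $G$, hence in $D$); and every vertex of $G^{-}$ outside $\{v_1,v_2,v_3\}$ carries no leaf of $G$, so has the same degree in $G$ as in $G^{-}$. Hence $G^{-}$ is a tree with at most three leaves, so it is either a path or a subdivision of $K_{1,3}$.

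Suppose $G^{-}$ is a path. Then two of $v_1,v_2,v_3$ are its endpoints and the third is an interior vertex, and each of the two sub‑paths joining consecutive dominating vertices has all of its interior vertices in $V(G^{-})\setminus\{v_1,v_2,v_3\}$, hence of degree $2$ in $G$; such a vertex is dominated by $D$ only when it is adjacent to one of the two dominating ends of that sub‑path, so each sub‑path contains at most two interior vertices, i.e. has length at most $3$. Together with $d_G(v_i,v_j)\ge3$ this forces each sub‑path to have length exactly $3$, and reattaching the pendent edges at $v_1,v_2,v_3$ gives $G\cong G_2(a,b,c)$ with $a+b+c=n-7$. Suppose instead $G^{-}$ is a subdivision of $K_{1,3}$, with centre $c^{*}$ the unique vertex of degree $3$ (so $c^{*}\notin\{v_1,v_2,v_3\}$) and with legs ending at $v_1,v_2,v_3$. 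The same domination argument along each leg forces every leg to have length at most $2$; since $c^{*}$ must itself be dominated, some leg has length $1$; and $d_G(v_i,v_j)\ge3$ forbids two legs of length $1$. Hence the leg‑lengths are $1,2,2$, and $G\cong G_1(a,b,c)$ with $a+b+c=n-6$.

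It remains to exclude $G\cong G_1(a,b,c)$, which is the crux of the lemma. Here Lemma \ref{lemsubdiv} provides no shortcut: subdividing any internal edge of a $G_1$‑graph strictly increases its domination number, because the degree‑$3$ centre lies outside every minimum dominating set and each such subdivision strands a degree‑$2$ vertex (the centre itself, or the freshly inserted one) with no neighbour in a three‑element dominating set. Instead, the plan is, for fixed order $n$ and an arbitrary $G_1(a,b,c)\in\mathbb{G}_{n,3}$, to exhibit a graph $G_2(a',b',c')\in\mathbb{G}_{n,3}$ of the same order — for instance one with $a'+b'+c'=a+b+c-1$, obtained by relocating a pendent edge of $G_1(a,b,c)$ onto one of its internal paths — then to compute $\Phi(G_1(a,b,c),x)$ and $\Phi(G_2(a',b',c'),x)$ by repeated application of Lemma \ref{lemcalcu}, and to show that their difference has a fixed sign on $[\rho(P_n),\infty)$; this yields $\rho(G_2(a',b',c'))<\rho(G_1(a,b,c))$, contradicting the minimality of $G$. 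One may first use Lemma \ref{lemsymm} to balance the pendent edges at the two symmetric ``outer'' vertices of each graph and so cut down the case analysis. Making this polynomial comparison work uniformly in $a,b,c$ for all $n\ge11$ is the main obstacle; everything preceding it is bookkeeping about trees with three dominating vertices.
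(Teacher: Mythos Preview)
Your reduction of the minimizer to one of the two families $G_1(a,b,c)$ or $G_2(a,b,c)$ via the leaf-deleted tree $G^{-}$ is correct, and in fact more carefully justified than in the paper, which simply asserts this dichotomy as a consequence of Theorem~\ref{themdistance}. The gap is entirely in the final step: you propose to eliminate $G_1$ by a characteristic-polynomial comparison, concede that this is the ``main obstacle'', and do not carry it out.

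The paper bypasses the polynomial computation altogether using Lemma~\ref{lemPero}. Your own argument already shows that in the spider case every $v_i$ is a leaf of $G^{-}$ and hence carries at least one pendant in $G$, so $b_1\ge 1$ and $G_2(a_1,b_1-1,c_1)\in\mathbb{G}_{n,3}$ is well defined. Take the Perron vector $\boldsymbol{x}$ of $G_2(a_1,b_1-1,c_1)$ and compare its entries at the two degree-$2$ neighbours $v_5,v_6$ of the central vertex $v_2$. If $x_{v_5}\ge x_{v_6}$, the single edge switch $-v_6v_7+v_5v_7$ turns $G_2(a_1,b_1-1,c_1)$ into a graph isomorphic to $G_1(a_1,b_1,c_1)$; if $x_{v_5}<x_{v_6}$, the switch $-v_4v_5+v_4v_6$ does the same. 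In either case Lemma~\ref{lemPero} gives $\rho(G_1(a_1,b_1,c_1))>\rho(G_2(a_1,b_1-1,c_1))$ immediately. So the ``main obstacle'' you identify dissolves once you observe that $G_1$ is obtained from a $G_2$ of the same order by a Kelmans-type move; the uniform polynomial comparison you sketch is not needed for this lemma.
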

\begin{proof}
	Without loss of generality, we assume $D(G)=\{v_1,v_2,v_3\}$ is a minimum dominating set of $G$. It follows from Theorem \ref{themdistance} that $G\cong G_1(a_1,b_1,c_1)$ or $G\cong G_2(a_2,b_2,c_2)$, where $a_i\geq1$,  $c_i\geq1$, $i=1,2$, $a_1+b_1+c_1=n-6$ and $a_2+b_2+c_2=n-7$.
	
	Let $\boldsymbol{x}=\{x_{v_1},x_{v_2},\dots,x_{v_n}\}$ be the Perron vector of $G_2(a_1,b_1-1,c_1)$. If $x_{v_5}\geq x_{v_6}$, from Lemma \ref{lemPero}, we obtain $\rho\left( G_2(a_1,b_1-1,c_1)\right) <\rho\left( G_2(a_1,b_1-1,c_1)-v_6v_7+v_5v_7\right) $. It can be checked that $G_1(a_1,b_1,c_1)\cong G_2(a_1,b_1-1,c_1)-v_6v_7+v_5v_7$. Hence, $\rho(G_1(a_1,b_1,c_1))>\rho(G_2(a_1,b_1-1,c_1))$. Analogously, if $x_{v_5}<x_{v_6}$, we have $G_1(a_1,b_1,c_1)\cong G_2(a_1,b_1-1,c_1)-v_4v_5+v_4v_6$ and $\rho(G_1(a_1,b_1,c_1))>\rho(G_2(a_1,b_1-1,c_1))$.
	
	As can be seen, if $G\in\mathbb{G}_{n,3}$ ($n\geq11$) have the minimum spectral radius, then $G\cong G_2(a,b,c)$ with $a+b+c=n-7$.
\end{proof}

\begin{lemma}
	Let $G\in \mathbb{G}_{n,3}$ $\left( n\geq11\right)$  has the minimum spectral radius, then $G$ is isomorphic to some graph $G_2(a,n-2a-7,a)$.
\end{lemma}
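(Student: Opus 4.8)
The plan is to start from Lemma \ref{lemform}, which already pins the minimizer down to some $G_2(a,b,c)$ with $a\ge 1$, $c\ge 1$ and $a+b+c=n-7$; relabelling if necessary we may assume $a\ge c$, and the target is to force $a=c$ (equivalently $b=n-2a-7$). The engine will be Lemma \ref{lemsymm}: I want to exhibit a symmetry of $G_2(a,b,c)$ that lets me slide the pendent edges from $v_1$ toward $v_3$ until the two ends carry (almost) equally many of them.

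First I would set up the base graph for Lemma \ref{lemsymm}. Let $\widehat{G}$ be the tree obtained from $G_2(a,b,c)$ by deleting the $a$ pendent vertices at $v_1$ together with the $c$ pendent vertices at $v_3$; thus $\widehat{G}$ is the path $v_1v_4v_5v_2v_6v_7v_3$ with $b$ pendent edges attached at $v_2$, and $G_2(a,b,c)$ is recovered from $\widehat{G}$ by re-attaching $a$ pendent edges at $v_1$ and $c$ pendent edges at $v_3$. The elementary but crucial point is that $\widehat{G}-v_1\cong\widehat{G}-v_3$: each is a copy of $P_6$ with $b$ pendent edges attached at the vertex lying at distance $2$ from one endpoint and distance $3$ from the other. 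Hence Lemma \ref{lemsymm} applies with $(u,v)=(v_1,v_3)$, and its ``balancing'' conclusion gives
\begin{equation*}
\rho\big(G_2(a,b,c)\big)\ \ge\ \rho\Big(G_2\big(\lceil\tfrac{a+c}{2}\rceil,\,b,\,\lfloor\tfrac{a+c}{2}\rfloor\big)\Big),
\end{equation*}
with equality if and only if $|a-c|\le 1$; note $\lfloor\tfrac{a+c}{2}\rfloor\ge 1$ because $a+c\ge 2$, so the balanced graph on the right is still a legitimate member of $\mathbb{G}_{n,3}$ (the dominating set $\{v_1,v_2,v_3\}$ still works and has minimum size $3$). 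In particular the minimizer must satisfy $|a-c|\le 1$, and we are finished at once when $a+c$ is even.

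The remaining step — ruling out $|a-c|=1$ — is the one I expect to be the real obstacle. Assuming $a=c+1$, I would try to beat $G_2(c+1,b,c)$ with the graph $H=G_2(c+1,b-1,c+1)$, which has the same order and is obtained from $G_2(c+1,b,c)$ by moving one pendent vertex from $v_3$ to $v_2$. Letting $\boldsymbol{x}$ be the Perron vector of $H$, its $v_1\leftrightarrow v_3$ symmetry forces $x_{v_1}=x_{v_3}$, so if one can show $x_{v_2}\ge x_{v_3}$ then Lemma \ref{lemPero} (applied with $u=v_2$, $v=v_3$, moving a single leaf from $v_3$ to $v_2$) yields $\rho(G_2(c+1,b,c))>\rho(H)$, contradicting minimality and giving $a=c$. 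The delicate ingredient is exactly this inequality $x_{v_2}\ge x_{v_3}$, i.e.\ that the central vertex $v_2$ of $H$ carries at least as much Perron weight as the peripheral vertex $v_3$; this can fail when $b$ is very small (piling leaves at the two ends makes $x_{v_1}=x_{v_3}$ large), so one must first discard such configurations — either by a preliminary lower bound on $b$ for the minimizer, or, more robustly, by abandoning the Perron‑vector route and instead comparing $\Phi(H,x)$ with $\Phi(G_2(c+1,b,c),x)$ directly via the second identity in Lemma \ref{lemcalcu}, evaluating at $x=\rho(G_2(c+1,b,c))$ and reducing to the sign of a low-degree polynomial in $x$ with parameters $b$ and $c$. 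I would pursue the characteristic‑polynomial route, since it makes the dependence on $b,c$ explicit and sidesteps the case analysis.
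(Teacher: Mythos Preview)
Your first step—applying Lemma~\ref{lemsymm} to the symmetric base $\widehat G$ to force $|a-c|\le 1$—is correct and is exactly what the paper does.

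The gap is in the second step. You propose to beat $G_2(c+1,b,c)$ with the \emph{single} competitor $H=G_2(c+1,b-1,c+1)$, expecting that a characteristic-polynomial comparison will go through even where the Perron inequality $x_{v_2}\ge x_{v_3}$ fails. But the obstruction is not a technical artifact of the Perron method: there are parameters for which $\rho(H)>\rho\big(G_2(c+1,b,c)\big)$, so \emph{no} comparison between these two graphs can yield the inequality you need. Concretely, take $n=15$, $c+1=4$, $b=1$. Then $H=G_2(4,0,4)$ has $\rho(H)^2=4+\sqrt{2}$ (the largest root of $\mu^2-8\mu+14$), while for $G_2(4,1,3)$ the relevant factor is $f_2(\mu,15)=\mu^5-14\mu^4+68\mu^3-131\mu^2+81\mu-12$; reducing modulo $\mu^2-8\mu+14$ gives $f_2(4+\sqrt{2},15)=5\sqrt{2}-6>0$, so $\rho\big(G_2(4,1,3)\big)^2<4+\sqrt{2}$. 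Thus $H$ does \emph{not} beat $G_2(4,1,3)$, and switching to characteristic polynomials cannot rescue the plan. (Also, when $b=0$ the graph $H$ does not even exist.)

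What the paper does instead is introduce a \emph{second} balanced competitor, $G_2(c,b+1,c)=G_2(a-1,n-2a-5,a-1)$, and show that the asymmetric graph $G_2(a,n-2a-6,a-1)$ is beaten by one or the other depending on the regime: by $G_2(a,n-2a-7,a)$ when $n$ is large relative to $a$ (roughly $n\ge 3a+4$), and by $G_2(a-1,n-2a-5,a-1)$ when $n$ is small (roughly $n\le 3a+3$). Perron-vector arguments handle the two extreme ranges, and explicit characteristic-polynomial comparisons are needed precisely in the transitional band $3a+2\le n\le 3a+5$. The case split is intrinsic to the problem, not to the method. (Minor slip: to obtain $H$ from $G_2(c+1,b,c)$ one moves a leaf from $v_2$ to $v_3$, not the other way.)
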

\begin{proof}
	We assume, without loss of generality, that $a\geq c$.  If $G\in\mathbb{G}_{n,3}$ ($n\geq11$) has the minimum spectral radius, by Lemma \ref{lemform} and Lemma \ref{lemsymm}, we have $G\cong G_2(a,n-2a-7,a)$ or $G\cong G_2(a,n-2a-6,a-1)$. Let $\boldsymbol{x}=\{x_{v_1},x_{v_2},\dots,x_{v_n}\}$ be the Perron vector of $G$. Assume to the contrary that $G\cong G_2(a,n-2a-6,a-1)$. By Lemma \ref{lemcalcu}, we calculate the characteristic polynomials of $G_2(a,n-2a-7,a)$, $G_2(a,n-2a-6,a-1)$, and $G_2(a-1,n-2a-5,a-1)$, which are
	\begin{equation*}
		\begin{split}
			&~~~~\Phi(G_2(a,n-2a-7,a),\lambda)\\
			&=\left[ \Phi(S_{a+2},\lambda)\Phi(S_{n-2a-4},\lambda)-\Phi(S_{a+1},\lambda)\Phi(S_{n-2a-5},\lambda)\right]\Phi(S_{a+2},\lambda)\\
			&~~~~-\left[ \Phi(S_{a+2},\lambda)\Phi(S_{n-2a-5},\lambda)-\Phi(S_{a+1},\lambda)\Phi(S_{n-2a-6},\lambda)\right] \Phi(S_{a+1},\lambda)\\
			&=\left\lbrace \lambda^a[\lambda^2-(a+1)]\cdot \lambda^{n-2a-6}[\lambda^2-(n-2a-5)]-\lambda^{a-1}(\lambda^2-a)\cdot \lambda^{n-2a-7}[\lambda^2-(n-2a-6)]\right\rbrace\\
			&~~~~\cdot \lambda^a(\lambda^2-(a+1))-\left\lbrace \lambda^a[\lambda^2-(a+1)]\cdot \lambda^{n-2a-7}[\lambda^2-(n-2a-6)]-\lambda^{a-1}(\lambda^2-a)\right.\\
			&~~~~\left.\cdot \lambda^{n-2a-8}[\lambda^2-(n-2a-7)]\right\rbrace\cdot \lambda^{a-1}(\lambda^2-a)\\
			&=\lambda^{n-10}\left\lbrace \lambda^{10}+(1-n)\lambda^8+[2n-7a+an-(a+2)(a-n+3)-2a^2-12]\lambda^6+\left[7a-an+a(a-n+3)\right.\right.\\
			&~~~~\left.\left.+(a-2)(8a-2n-an+2a^2+12)+2a^2 \right]\lambda^4+\left[-(a+2)(7a-an+2a^2)\right.\right.\\
			&~~~~\left.\left.-a(8a-2n-an+2a^2+12) \right]\lambda^2
			+a(7a-an+2a^2)   \right\rbrace,
		\end{split}
	\end{equation*}
	\begin{equation*}
		\begin{split}
			\begin{split}
				&~~~~\Phi(G_2(a,n-2a-6,a-1),\lambda)\\
				&=\left[ \Phi(S_{a+2},\lambda)\Phi(S_{n-2a-3},\lambda)-\Phi(S_{a+1},\lambda)\Phi(S_{n-2a-4},\lambda)\right]\Phi(S_{a+1},\lambda)\\
				&~~~~-\left[ \Phi(S_{a+2},\lambda)\Phi(S_{n-2a-4},\lambda)-\Phi(S_{a+1},\lambda)\Phi(S_{n-2a-5},\lambda)\right] \Phi(S_{a},\lambda)\\
				&=\left\lbrace \lambda^a[\lambda^2-(a+1)]\cdot \lambda^{n-2a-5}[\lambda^2-(n-2a-4)]-\lambda^{a-1}(\lambda^2-a)\cdot \lambda^{n-2a-6}[\lambda^2-(n-2a-5)]\right\rbrace\\
				&~~~~\cdot \lambda^{a-1}(\lambda^2-a)-\left\lbrace \lambda^a[\lambda^2-(a+1)]\cdot \lambda^{n-2a-6}[\lambda^2-(n-2a-5)]-\lambda^{a-1}(\lambda^2-a)\right.\\
				&~~~~\left.\cdot \lambda^{n-2a-7}[\lambda^2-(n-2a-6)]\right\rbrace\cdot \lambda^{a-2}(\lambda^2-(a-1))\\
				&=\lambda^{n-10}\left\lbrace \lambda^{10}+(1-n)\lambda^8+(3n-9a+2an-3a^2-13)\lambda^6+(24a-n-5an-a^2n+12a^2+2a^3+8)\lambda^4\right.\\
				&~~~~\left.+(2an-2n-9a+2a^2n-13a^2-4a^3+11)\lambda^2+an-6a-a^2n+4a^2+2a^3   \right\rbrace,
			\end{split}
		\end{split}
	\end{equation*}
	and
	\begin{equation*}
		\begin{split}
			\begin{split}
				&~~~~\Phi(G_2(a-1,n-2a-5,a-1),\lambda)\\
				&=\left[ \Phi(S_{a+1},\lambda)\Phi(S_{n-2a-2},\lambda)-\Phi(S_{a},\lambda)\Phi(S_{n-2a-3},\lambda)\right]\Phi(S_{a+1},\lambda)\\
				&~~~~-\left[ \Phi(S_{a+1},\lambda)\Phi(S_{n-2a-3},\lambda)-\Phi(S_{a},\lambda)\Phi(S_{n-2a-4},\lambda)\right] \Phi(S_{a},\lambda)\\
				&=\left\lbrace \lambda^{a-1}(\lambda^2-a)\cdot \lambda^{n-2a-4}[\lambda^2-(n-2a-3)]-\lambda^{a-2}[\lambda^2-(a-1)]\cdot \lambda^{n-2a-5}[\lambda^2-(n-2a-4)]\right\rbrace\\
				&~~~~\cdot \lambda^{a-1}(\lambda^2-a)-\left\lbrace \lambda^{a-1}(\lambda^2-a)\cdot \lambda^{n-2a-5}[\lambda^2-(n-2a-4)]-\lambda^{a-2}[\lambda^2-(a-1)]\right.\\
				&~~~~\left.\cdot \lambda^{n-2a-6}[\lambda^2-(n-2a-5)]\right\rbrace\cdot \lambda^{a-2}(\lambda^2-(a-1))\\
				&=\lambda^{n-10}\left\lbrace
				\lambda^{10}+(1-n)\lambda^8+[n-3a+an-(a+1)(a-n+2)-2a^2-7]\lambda^6+\left[3a+n-an\right.\right.\\
				&~~~~\left.\left.+(a+1)(4a-n-an+2a^2+6)+(a-1)(a-n+2)+2a^2-5 \right]\lambda^4+\left[-(a+1)(3a+n-an+2a^2-5)\right.\right.\\
				&~~~~\left.\left.-(a-1)(4a-n-an+2a^2+6) \right]\lambda^2
				+(a-1)(3a+n-an+2a^2-5)   \right\rbrace,
			\end{split}
		\end{split}
	\end{equation*}
	respectively.

	Let $\mu=\lambda^2$,
	\begin{equation*}
		\begin{split}
			f_1(\mu,n)&=[\mu^2-(2+a)\mu+a]\cdot[\mu^3+(a-n+3)\mu^2+(2n-8a+an-2a^2-12)\mu+7a-an+2a^2],\\
			f_2(\mu,n)&=\mu^5+(1-n)\mu^4+(3n-9a+2an-3a^2-13)\mu^3+(24a-n-5an-a^2n+12a^2+2a^3+8)\mu^2\\
			&~~~~+(2an-2n-9a+2a^2n-13a^2-4a^3+11)\mu+an-6a-a^2n+4a^2+2a^3,
		\end{split}
	\end{equation*}
	and
	\begin{equation*}
		f_3(\mu,n)=[\mu^2-(a+1)\mu+(a-1)]\cdot[\mu^3+(a-n+2)\mu^2+(n-4a+an-2a^2-6)\mu+(a-1)(2a-n+5)].
	\end{equation*}
	
	Hence, 
	\begin{equation*}
		\begin{split}
			\Phi(G_2(a,n-2a-7,a),\lambda)&=\mu^{\frac{n-10}{2}}f_1(\mu,n),\\
			\Phi(G_2(a,n-2a-6,a-1),\lambda)&=\mu^{\frac{n-10}{2}}f_2(\mu,n),\\
			\Phi(G_2(a-1,n-2a-5,a-1),\lambda)&=\mu^{\frac{n-10}{2}}f_3(\mu,n).
		\end{split}
	\end{equation*}
	
	The roots of functions $f_i(\mu,n)$ are denoted by $\mu_{i,1}(n)\geq\mu_{i,2}(n)\geq\cdots\geq\mu_{i,5}(n)$ for $i=1,2,3$. One can see that
	\begin{equation*}
		\begin{split}
			\rho^2(G_2(a,n-2a-7,a))&=\mu_{1,1}(n).\\
			\rho^2(G_2(a,n-2a-6,a-1))&=\mu_{2,1}(n),\\
			\rho^2(G_2(a-1,n-2a-5,a-1))&=\mu_{3,1}(n).
		\end{split}
	\end{equation*}
	
	A straightforward calculation shows that $\mu_{1,2}=\frac{a}{2}+\frac{\sqrt{a^2+4}}{2}+1$, $\mu_{1,4}=\frac{a}{2}-\frac{\sqrt{a^2+4}}{2}+1$, $\mu_{3,2}=\frac{a}{2}+\frac{\sqrt{a^2-2a+5}}{2}+\frac{1}{2}$ and $\mu_{3,4}=\frac{a}{2}-\frac{\sqrt{a^2-2a+5}}{2}+\frac{1}{2}$.
	
	Without loss of generality, we assume that
	\begin{equation*}
		\begin{split}
			N_{G_2(a,n-2a-7,a)}(v_1)&=\{v_4,\dots,v_{a+3},v_{a+4}\},\\ N_{G_2(a,n-2a-7,a)}(v_2)&=\{v_{a+5},v_{a+6},\dots,v_{n-a-2},v_{n-a-1}\},\\
			N_{G_2(a,n-2a-7,a)}(v_3)&=\{v_{n-a},v_{n-a+1},\dots,v_n\},\\
			N_{G_2(a-1,n-2a-5,a-1)}(v_1)&=\{v_4,\dots,v_{a+2},v_{a+3}\},\\ N_{G_2(a-1,n-2a-5,a-1)}(v_2)&=\{v_{a+4},v_{a+5},\dots,v_{n-a-1},v_{n-a}\},\\
			N_{G_2(a-1,n-2a-5,a-1)}(v_3)&=\{v_{n-a+1},v_{n-a+2},\dots,v_n\}.
		\end{split}
	\end{equation*}
\begin{figure}[htbp]
	\centering
	\subfigure[$G_2(a,n-2a-7,a)$]{
		\includegraphics[width=7.6cm]{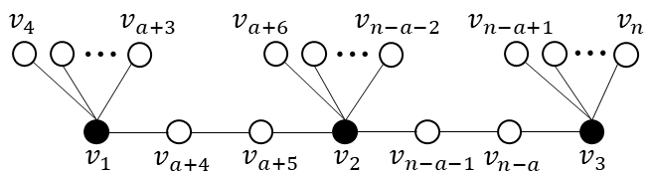}
	}
	\quad
	\subfigure[$G_2(a,n-2a-6,a-1)$]{
		\includegraphics[width=7.6cm]{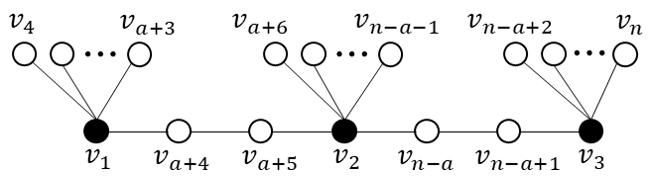}
		
	}
	\quad
	\subfigure[$G_2(a-1,n-2a-5,a-1)$]{
		\includegraphics[width=7.6cm]{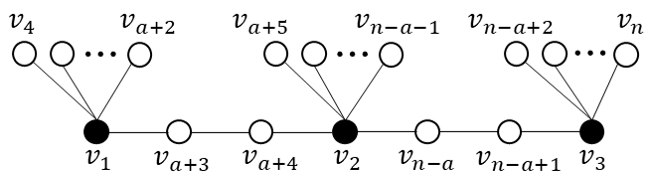}
		
	}
	\caption{The graphs $G_2(a,n-2a-7,a)$, $G_2(a,n-2a-6,a-1)$, and $G_2(a-1,n-2a-5,a-1)$.}
	\label{theregraph}
\end{figure}

	For $G_2(a,n-2a-7,a)$, it is clear that
	\begin{align}
		\rho x_{v_4}=\dots=\rho x_{v_{a+3}}&=x_{v_1},\label{sys1}\\
		\rho x_{v_1}&=ax_{v_4}+x_{v_{a+4}},\label{sys2}\\
		\rho x_{v_{a+4}} &= x_{v_1}+x_{v_{a+5}}, \label{sys3}\\
		\rho x_{v_{a+5}} &= x_{v_{a+4}}+x_{v_2}, \label{sys4}\\
		\rho x_{v_{a+6}} = \dots = \rho x_{v_{n-a-2}} &= x_{v_2}, \label{sys5}\\
		\rho x_{v_2} &= x_{v_{a+5}} + x_{v_{n-a-1}} + (n-2a-7)x_{v_{a+6}}.\label{sys6}
 	\end{align}

 	Combining \eqref{sys1}-\eqref{sys2} and \eqref{sys5}-\eqref{sys6}, we have
 	\begin{equation*}
 		\begin{split}
 			\rho^2 x_{v_4}=\rho x_{v_1}=ax_{v_4}+x_{v_{a+4}}&\Rightarrow x_{v_{a+4}}=(\rho^2-a)x_{v_4},\\
 			\rho^2 x_{v_{a+6}} =\rho x_{v_2} = x_{v_{a+5}} + x_{v_{n-a-1}} + (n-2a-7)x_{v_{a+6}} &\Rightarrow x_{v_{a+5}} = \frac{1}{2}[\rho^2-(n-2a-7)]x_{v_{a+6}}.
 		\end{split}
 	\end{equation*}
 	
 	From \eqref{sys3}-\eqref{sys4}, it follows that 
 	\begin{equation*}
 		\begin{split}
 			&\rho(x_{v_{a+4}}-x_{v_{a+5}})=(x_{v_1}-x_{v_2})+(x_{v_{a+5}}-x_{v_{a+4}})\\
 			&\Leftrightarrow (\rho+1)(x_{v_{a+4}}-x_{v_{a+5}})=x_{v_1}-x_{v_2}\\
 			&\Leftrightarrow(\rho+1)\left\lbrace (\rho^2-a)x_{v_4}-\frac{1}{2}[\rho^2-(n-2a-7)]x_{v_{a+6}}\right\rbrace =x_{v_1}-x_{v_2}\\
 			&\Leftrightarrow \frac{(\rho^2-a)(\rho+1)}{\rho}x_{v_1}-\frac{\frac{1}{2}[\rho^2-(n-2a-7)](\rho+1)}{\rho}x_{v_2}=x_{v_1}-x_{v_2}\\
 			&\Leftrightarrow \frac{(\rho^2-a)(\rho+1)-\rho}{\rho}x_{v_1}=\frac{\frac{1}{2}[\rho^2-(n-2a-7)](\rho+1)-\rho}{\rho}x_{v_2}\\
 			&\Leftrightarrow \frac{x_{v_1}}{x_{v_2}}=\frac{\frac{1}{2}[\rho^2-(n-2a-7)](\rho+1)-\rho}{(\rho^2-a)(\rho+1)-\rho}.
 		\end{split}
 	\end{equation*}
 
 	Note that $G_2(a,n-2a-7,a)$ contains $S_{a+2}$ as a proper subgraph. By Lemma \ref{lemsub}, we get $\rho^2(G_2(a,n-2a-7,a))>a+1$. Thus,  $(\rho^2-a)(\rho+1)-\rho=[\rho^2-(a+1)]\rho+(\rho^2-a)>0$. If $\frac{x_{v_1}}{x_{v_2}}<1$, one can see that
 	\begin{align}
 			&\frac{x_{v_1}}{x_{v_2}}=\frac{\frac{1}{2}[\rho^2-(n-2a-7)](\rho+1)-\rho}{(\rho^2-a)(\rho+1)-\rho}<1\label{inequ1}\\
 			&\Leftrightarrow \frac{1}{2}[\rho^2-(n-2a-7)](\rho+1)-\rho<(\rho^2-a)(\rho+1)-\rho\nonumber\\
 			&\Leftrightarrow\frac{1}{2}(\rho+1)[(4a-n+7)-\rho^2]<0\nonumber.
 	\end{align}
 
 	Since $\rho^2(G_2(a,n-2a-7,a))>a+1$, if $(4a-n+7)\leq a+1$, i.e., $n\geq 3a+6$, we have $[(4a-n+7)-\rho^2]<0$. Then inequality \eqref{inequ1} holds, implying $x_{v_1}<x_{v_2}$. Notice that $G_2(a,n-2a-6,a-1)\cong G_2(a,n-2a-7,a)-v_1v_{4}+v_2v_4$. From Lemma \ref{lemPero}, we have $\rho(G_2(a,n-2a-6,a-1))>\rho(G_2(a,n-2a-7,a))$, a contradiction.
 	
 	For $G_2(a-1,n-2a-5,a-1)$, it follows that
 	\begin{align}
 		\rho x_{v_4}=\dots=\rho x_{v_{a+2}}&=x_{v_1},\label{sysm1}\\
 		\rho x_{v_1}&=(a-1)x_{v_4}+x_{v_{a+3}},\label{sysm2}\\
 		\rho x_{v_{a+3}} &= x_{v_1}+x_{v_{a+4}}, \label{sysm3}\\
 		\rho x_{v_{a+4}} &= x_{v_{a+3}}+x_{v_2}, \label{sysm4}\\
 		\rho x_{v_{a+5}} = \dots = \rho x_{v_{n-a-1}} &= x_{v_2}, \label{sysm5}\\
 		\rho x_{v_2} &= x_{v_{a+4}} + x_{v_{n-a}} + (n-2a-5)x_{v_{a+5}}.\label{sysm6}
 	\end{align}
Similarly, we calculate that
\begin{equation*}
	\frac{x_{v_1}}{x_{v_2}}=\frac{\frac{1}{2}[\rho^2-(n-2a-5)](1+\rho)-\rho}{[\rho^2-(a-1)](1+\rho)-\rho}.
\end{equation*}

Since $G_2(a-1,n-2a-5,a-1)$ contains $S_{a+1}$ as a proper subgraph, from Lemma \ref{lemsub}, we have $\rho^2(G_2(a-1,n-2a-5,a-1))>a$. Hence, $[\rho^2-(a-1)](1+\rho)-\rho=(\rho^2-a)\rho+[\rho^2-(a-1)]>0$. If $\frac{x_{v_1}}{x_{v_2}}>1$, we have
\begin{equation*}
	\begin{split}
		&\frac{x_{v_1}}{x_{v_2}}=\frac{\frac{1}{2}[\rho^2-(n-2a-5)](1+\rho)-\rho}{[\rho^2-(a-1)](1+\rho)-\rho}>1\\
		&\Leftrightarrow \frac{1}{2}[\rho^2-(n-2a-5)](1+\rho)-\rho>[\rho^2-(a-1)](1+\rho)-\rho\\
		&\Leftrightarrow \frac{1}{2}(\rho+1)[(4a-n+3)-\rho^2]>0.
	\end{split}
\end{equation*}

By plugging the value $a+2$ into $\mu$ of $f_3(\mu,n)$, we have
\begin{equation*}
	f_3(a+2,n)=-(2a+1)^2n+12a^3+32a^2+11a-1.
\end{equation*}

For $n\leq 3a+1$, one can see that $4a-n+3\geq a+2$ and $f_3(a+2,3a+1)=-(2a+1)^2n+12a^3+32a^2+11a-1\geq16a^2+4a-2>0$ ($a\geq1$). Note that $a+2>\frac{a}{2}+\frac{\sqrt{a^2-2a+5}}{2}+\frac{1}{2}=\mu_{3,2}$. We have $a+2\geq\mu_{3,1}=\rho^2(G_2(a-1,n-2a-5,a-1))$ and $4a-n+3\geq a+2> \rho^2(G_2(a-1,n-2a-5,a-1))$. Thus, $4a-n+3-\rho^2>0$, implying $x_{v_1}>x_{v_2}$. It's easy to see that $G_2(a,n-2a-6,a-1)\cong G_2(a-1,n-2a-5,a-1)-v_2v_{a+5}+v_1v_{a+5}$. By Lemma \ref{lemPero}, we obtain that $\rho(G_2(a,n-2a-6,a-1))>\rho(G_2(a-1,n-2a-5,a-1))$, a contradiction.

For $n=3a+2$, let
\begin{equation*}
	\psi_{1}(\mu)=f_2(\mu,3a+2)-f_3(\mu,3a+2)=(\mu-1)(3\mu-3a+2a\mu-2\mu^2+3)
\end{equation*}

Solving equation $\psi_{1}(\mu)=0$, we obtain $\tilde{\mu}_{1,1}=\frac{a}{2}+\frac{\sqrt{4a^2-12a+33}}{4}+\frac{3}{4}$, $\tilde{\mu}_{1,2}=\frac{a}{2}-\frac{\sqrt{4a^2-12a+33}}{4}+\frac{3}{4}$, and $\tilde{\mu}_{1,3}=1$.

Notice that $\mu_{2,1}(3a+2)=\rho^2(G_2(a,a-4,a-1))>a+1> \tilde{\mu}_{1,1}=\frac{a}{2}+\frac{\sqrt{4a^2-12a+33}}{4}+\frac{3}{4}$ for $a\geq3$. 
Then we have
\begin{equation*}
		\psi_{1}(\mu_{2,1}(3a+2))=\psi_{1}\left( \rho^2\left( G_2(a,a-4,a-1)\right) \right)<\psi_{1}\left( a+1 \right)=2a(2-a)<0,
\end{equation*}
which implies that $f_3(\mu_{2,1}(3a+2),3a+2)=f_3(\rho^2\left( G_2(a,a-4,a-1)\right),3a+2)>0$. Since $\mu_{2,1}(3a+2)=\rho^2\left( G_2(a,a-4,a-1)\right)>a+1>\mu_{3,2}=\frac{a}{2}+\frac{\sqrt{a^2-2a+5}}{2}+\frac{1}{2}$, we conclude that $\mu_{2,1}(3a+2)>\mu_{3,1}(3a+2)$. Hence, $\rho(G_2(a,a-4,a-1))>\rho(G_2(a-1,a-3,a-1))$.

For $n=3a+3$, let
\begin{equation*}
	\psi_{2}(\mu)=f_2(\mu,3a+3)-f_3(\mu,3a+3)=(\mu-1)(2\mu-2a+a\mu-\mu^2+2)
\end{equation*}

Solving equation $\psi_{2}(\mu)=0$, we get $\tilde{\mu}_{2,1}=\frac{a}{2}+\frac{\sqrt{a^2-4a+12}}{2}+1$, $\tilde{\mu}_{2,2}=\frac{a}{2}-\frac{\sqrt{a^2-4a+12}}{2}+1$, and $\tilde{\mu}_{2,3}=1$.

Note that $\mu_{2,1}(3a+3)=\rho^2(G_2(a,a-3,a-1))>a+1>\tilde{\mu}_{2,1}=\frac{a}{2}+\frac{\sqrt{a^2-4a+12}}{2}+1$ for $a\geq3$. Then we obtain that 
\begin{equation*}
	\psi_{2}(\mu_{2,1}(3a+3))=\psi_{2}\left( \rho^2\left( G_2(a,a-3,a-1)\right) \right)<\psi_{2}(a+1)=a(3-a)\leq 0,
\end{equation*}
which implies that $f_{3}(\mu_{2,1}(3a+3),3a+3)=f_{3}(\rho^2(G_2(a,a-3,a-1)),3a+3)>0$. Since $\mu_{2,1}(3a+3)=\rho^2(G_2(a,a-3,a-1))>a+1>\mu_{3,2}=\frac{a}{2}+\frac{\sqrt{a^2-2a+5}}{2}+\frac{1}{2}$, one can see that $\mu_{2,1}(3a+3)>\mu_{3,1}(3a+3)$. Thus, $\rho(G_2(a,a-3,a-1))>\rho(G_2(a-1,a-2,a-1))$.

For $n=3a+4$, let
\begin{equation*}
	\psi_{3}(\mu)=f_2(\mu,3a+4)-f_1(\mu,3a+4)=(\mu-1)(2a-3\mu-a\mu+\mu^2)
\end{equation*}

Solving equation $\psi_{3}(\mu)=0$, we get $\tilde{\mu}_{3,1}=\frac{a}{2}+\frac{\sqrt{a^2-2a+9}}{2}+\frac{3}{2}$, $\tilde{\mu}_{3,2}=\frac{a}{2}-\frac{\sqrt{a^2-2a+9}}{2}+\frac{3}{2}$, and $\tilde{\mu}_{3,3}=1$.

By plugging the value $\tilde{\mu}_{3,1}$ into $\mu$ of $f_1(\mu,3a+4)$,
\begin{equation*}
	\begin{split}
		f_1(\tilde{\mu}_{3,1},3a+4)&=\left( 3-2a+\frac{a^2}{2}\right)\cdot\sqrt{a^2-2a+9}-\left( \frac{a^3}{2}-\frac{5}{2}a^2+7a-9\right)\\
		&=\frac{2a^2}{\left( 3-2a+\frac{a^2}{2}\right)\cdot\sqrt{a^2-2a+9}+\left( \frac{a^3}{2}-\frac{5}{2}a^2+7a-9\right)}\\
		&>0.
	\end{split}  
\end{equation*}

Moreover, we calculate that
\begin{equation*}
	\begin{split}
		\tilde{\mu}_{3,1}-\mu_{1,2}&=\frac{1}{2}\left[\left( \sqrt{a^2-2a+9}+1\right) -\sqrt{a^2+4} \right]\\
		&= \frac{\sqrt{a^2-2a+9}-(a-3)}{\left( \sqrt{a^2-2a+9}+1\right) +\sqrt{a^2+4}}\\
		&=\frac{4a}{\left[ \left( \sqrt{a^2-2a+9}+1\right) +\sqrt{a^2+4}\right] \left[ \sqrt{a^2-2a+9}+(a-3)\right] }\\
		&>0.
	\end{split}
\end{equation*}

Then we have
\begin{equation*}
	\tilde{\mu}_{3,2}<\mu_{1,2}<\mu_{1,1}(3a+4)=\rho^2(G_2(a,n-3,a))<\tilde{\mu}_{3,1},
\end{equation*}
which implies that
\begin{equation*}
	\psi_{3}(\mu_{1,1}(3a+4))=f_2(\mu_{1,1}(3a+4),3a+4)-f_1(\mu_{1,1}(3a+4),3a+4)<\psi_{3}(\tilde{\mu}_{3,1})=0.
\end{equation*}

Hence, $f_2(\mu_{1,1}(3a+4),3a+4)<0$, implying $\mu_{2,1}(3a+4)>\mu_{1,1}(3a+4)$, i.e., $\rho(G_2(a,a-2,a-1))>\rho(G_2(a,a-3,a))$.

For $n=3a+5$, let
\begin{equation*}
	\psi_{4}(\mu)=f_2(\mu,3a+5)-f_1(\mu,3a+5)=(a-\mu)(\mu-1).
\end{equation*}

Obviously, $\psi_{4}(\mu_{1,1}(3a+5))=f_2(\mu_{1,1}(3a+5),3a+5)-f_1(\mu_{1,1}(3a+5),3a+5)=(a-\mu_{1,1}(3a+5))(\mu_{1,1}(3a+5)-1)<0$.

Then we have $f_2(\mu_{1,1},3a+5)<0$, implying $\mu_{2,1}(3a+5)>\mu_{1,1}(3a+5)$. One can see that 
$\rho(G_2(a,a-1,a-1))>\rho(G_2(a,a-2,a))$.

As can be seen, the minimzer graph in $G_{n,\gamma}$ is isomorphic to some graph $G_2(a,n-2a-7,a)$, for $n\geq 11$ and $a\geq1$.
\end{proof}

\begin{theorem}
	For any graph $G\in \mathbb{G}_{n,3}$ with $n\geq 11$, then
	\begin{itemize}
		\item [\textnormal{(\romannumeral1)}] If $n=3m$, then $\rho(G)\geq\rho(G_2(m-2,m-3,m-2))$ and equality holds if and only if $G\cong G_2(m-2,m-3,m-2)$.
		\item [\textnormal{(\romannumeral2)}] If $n=3m+1$, then $\rho(G)\geq\rho(G_2(m-1,m-4,m-1))$ and equality holds if and only if $G\cong G_2(m-1,m-4,m-1)$.
		\item [\textnormal{(\romannumeral3)}] If $n=3m+2$, then $\rho(G)\geq\rho(G_2(m-1,m-3,m-1))$ and equality holds if and only if $G\cong G_2(m-1,m-3,m-1)$.
	\end{itemize}
\end{theorem}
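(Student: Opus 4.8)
The preceding lemma reduces everything to the one–parameter family: the minimizer in $\mathbb{G}_{n,3}$ is $S_j:=G_2(j,n-2j-7,j)$ for some integer $j$ with $1\le j\le\lfloor\tfrac{n-7}{2}\rfloor$, so I must minimize $\rho(S_j)$ over $j$. Writing $b=b(j)=n-2j-7$ and rewriting the factorization of $\Phi(S_j,\lambda)$ from the preceding lemma through $b$, with $\mu=\lambda^2$ one gets $\Phi(S_j,\lambda)=\mu^{(n-10)/2}\,p_j(\mu)q_j(\mu)$ (times $\lambda$ when $n$ is odd), where $p_j(\mu)=\mu^2-(j+2)\mu+j$ and
\[
q_j(\mu)=\mu^3-(j+b+4)\mu^2+(jb+3j+2b+2)\mu-jb .
\]
The larger root of $p_j$ is $\mu_{1,2}^{(j)}=\tfrac12(j+2+\sqrt{j^2+4})>j+1$, and reducing $q_j$ modulo $\mu^2=(j+2)\mu-j$ gives $q_j(\mu_{1,2}^{(j)})=2(j-\mu_{1,2}^{(j)})<0$; since $q_j\to+\infty$, the largest root $\mu(j)$ of $q_j$ exceeds $\mu_{1,2}^{(j)}$, so $\rho(S_j)^2=\mu(j)$ and $\mu(j)$ is a simple, isolated root of $q_j$. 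I will use: $\rho(S_j)^2>j+1$ (the star $K_{1,j+1}$ at $v_1$ is a proper subgraph, Lemma~\ref{lemsub}); $q_j(1)=n-8>0$; $q_j(j+1)=n-3j-9$; and $q_j(j+3)=6j^2+(25-2n)j+(18-3n)$.

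The engine is the identity, obtained by direct expansion using $b(j+1)=b(j)-2$,
\[
q_{j+1}(\mu)-q_j(\mu)=(\mu-1)\bigl(\mu-(4j-n+9)\bigr),
\]
together with its ``continuous'' counterpart $\partial_t q_t(\mu)=(\mu-1)\bigl(\mu-(4t-n+7)\bigr)$, where $q_t$ is the cubic with $j$ replaced by a real $t$ and $b$ by $n-2t-7$. Because its largest root $\mu(t)$ is simple and stays strictly above $\mu_{1,2}^{(t)}$ (by the same sign computation), $\mu(t)$ is a smooth function of $t\in(0,\tfrac{n-7}{2})$ with $\mu(t)>t+1>1$. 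Differentiating $q_t(\mu(t))=0$ and using $\partial_\mu q_t(\mu(t))>0$, the sign of $\mu'(t)$ equals the sign of $-(\mu(t)-(4t-n+7))$. Hence $g(t):=\mu(t)-(4t-n+7)$ satisfies $g'(t)=\mu'(t)-4$, and at any zero of $g$ one has $\mu'(t)=0$, so $g'(t)=-4<0$: $g$ is decreasing through every zero, hence has at most one; as $g$ is positive for small $t$ and negative near $t=\tfrac{n-7}{2}$ (there $\mu(t)\approx t+1$ while $4t-n+7\approx n-7$), it has exactly one zero $t_0$, is positive on $(0,t_0)$ and negative afterwards. Therefore $\mu(t)$ is strictly decreasing on $(0,t_0)$ and strictly increasing on $(t_0,\tfrac{n-7}{2})$.

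It remains to identify the integer minimizer. Set $j^*=\lceil\tfrac{n-6}{3}\rceil$, so $j^*=m-2$ if $n=3m$ and $j^*=m-1$ if $n=3m+1$ or $n=3m+2$; then $S_{j^*}$ is precisely the graph in (i)–(iii). From $\rho(S_j)^2>j+1$ one gets $g(j)>0$ whenever $j\le\tfrac{n-8}{3}$, while from $q_j(j+3)>0$ for $j\ge j^*$ (an elementary quadratic-in-$j$ inequality, plus a finite check for small $n$) one gets $\mu(j)<j+3$, hence $g(j)<0$ for $j\ge j^*+1$; thus $t_0\in(j^*-1,j^*+1)$ and the integer minimizer lies in $\{j^*-1,j^*,j^*+1\}$. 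Finally one compares $\rho(S_{j^*})$ with its two neighbours by evaluating the displayed identities at $\mu(j^*)$ and $\mu(j^*-1)$: using $q_j(j+1)=n-3j-9<0$ at $j=j^*$ to see that $\mu(j^*)$ lies to the right of the second root of $q_{j^*+1}$, together with $\mu(j^*)<j^*+3\le 4j^*-n+9$, one finds $q_{j^*+1}(\mu(j^*))<0$, hence $\mu(j^*)<\mu(j^*+1)$; and using that $\rho(S_{j^*-1})^2$ exceeds $b(j^*-1)+1$ (the degree bound at $v_2$, which is $\ge j^*+2$ for the relevant $n$) one finds $q_{j^*}(\mu(j^*-1))>0$, hence $\mu(j^*)<\mu(j^*-1)$. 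Combined with the strict unimodality of $\mu$, this pins the unique minimizer at $j=j^*$, yielding the three cases. The main obstacle is exactly this last step: the one-step comparisons around $j^*$ are delicate because the largest roots of consecutive cubics differ by only $O(1)$, so — unlike everything away from $j^*$, which is handled uniformly by the unimodality argument — they, together with the bound $q_j(j+3)>0$ for $j\ge j^*$, require a careful split by $n\bmod 3$ and a handful of explicit checks for the smallest admissible $n$.
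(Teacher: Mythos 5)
Your argument is correct in substance and lands on the same minimizer $a=\lceil\frac{n-6}{3}\rceil$, but it takes a genuinely different route from the paper. The paper compares consecutive symmetric graphs by subtracting the full quintics, $\varphi(\mu,n)=f_1(\mu,n)-f_3(\mu,n)=-(\mu-1)(\mbox{quadratic in }\mu)$, solves that quadratic explicitly for $\bar{\mu}_{1}(n),\bar{\mu}_{2}(n)$ (with their large radicals), and then determines the sign of $\varphi$ at $\mu_{1,1}(n)$ through a four-way case split $2a+7\le n\le 3a+2$, $n=3a+3$, $n=3a+4$, $n\ge 3a+5$, each requiring its own root-location and polynomial inequality. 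You instead isolate the cubic factor $q_j$ of the quintic --- legitimately, since your reduction $q_j\equiv 2(j-\mu)\pmod{p_j}$ shows $q_j(\mu_{1,2}^{(j)})<0$, so the Perron root of $S_j$ is the largest root of $q_j$ --- and exploit the far cleaner identities $q_{j+1}-q_j=(\mu-1)\bigl(\mu-(4j-n+9)\bigr)$ and $\partial_tq_t=(\mu-1)\bigl(\mu-(4t-n+7)\bigr)$. The continuous unimodality argument ($g'=-4$ at every zero of $g$) then replaces the paper's entire case analysis by two local comparisons at $j^*$, and explains conceptually why the minimum sits near $a\approx n/3$. The price is the need to know that the largest root of $q_t$ stays simple for all real $t$; your tools do supply this ($q_t(1)=n-8>0$ and $q_t(\mu_{1,2}^{(t)})<0$ force one simple root in $(1,\mu_{1,2}^{(t)})$ and one simple largest root above it), but you should say so explicitly rather than assert $\partial_\mu q_t(\mu(t))>0$.

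Three small repairs, none fatal. First, $q_j(j+1)=n-3j-8$, not $n-3j-9$ (check $n=12$, $j=2$: $q(3)=-2$); you only use its negativity at $j=j^*$, which still holds since $n-3j^*-8\le-2$. Second, $\mu(j)>j+1$ gives $g(j)>0$ for all $j\le\frac{n-6}{3}$, not merely $j\le\frac{n-8}{3}$; you need the sharper threshold to certify $g(j^*-1)>0$ when $n\equiv1\pmod 3$ (alternatively, drop the localization of $t_0$ entirely: unimodality plus your two comparisons $\mu(j^*)<\mu(j^*\pm1)$ already pin the minimizer). Third, when passing from $q_{j^*+1}(\mu(j^*))<0$ to $\mu(j^*)<\mu(j^*+1)$, note that $\mu(j^*)>1$ exceeds the smallest root of $q_{j^*+1}$ (again from $q_{j^*+1}(1)>0$), so $\mu(j^*)$ must lie in the upper negativity interval; the mirror issue for $q_{j^*}(\mu(j^*-1))>0$ is exactly what your degree bound $\mu(j^*-1)>d(v_2)=n-2j^*-3$ is for, and it does clear the second root of $q_{j^*}$ in all three residue classes. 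With these points made explicit, your proof is complete and, in my view, tidier than the published one.
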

\begin{proof}
	By the proof of Theorem 4.2, one can see that the characteristic polynomials of $G_{2}(a,n-2a-7,a)$ and $G_{2}(a-1,n-2a-5,a-1)$ are $f_1(\mu,n)$ and $f_{3}(\mu,n)$, respectively.
	
	Let
	\begin{equation*}
		\begin{split}
			\varphi(\mu,n)&=f_1(\mu,n)-f_3(\mu,n)\\
			&=-(\mu-1)\left( (6a-2n+9)\mu^2+(3n-18a+2an-6a^2-16)\mu+8a+n-2an+6a^2-5\right).
		\end{split}
	\end{equation*}

	Solving equation $\phi(\mu,n)=0$, we get
	\begin{equation*}
		\begin{split}
			\bar{\mu}_1(n) &=\frac{18a-3n-2an+6a^2+16}{2(6a-2n+9)} \\
			&~~~~+\frac{\sqrt{(4a^2-4a+17)n^2+(-24a^3-12a^2-60a-172)n+36a^4+72a^3+108a^2+408a+436}}{2(6a-2n+9)},\\
			\bar{\mu}_2(n) &= \frac{18a-3n-2an+6a^2+16}{2(6a-2n+9)} \\
			&~~~~-\frac{\sqrt{(4a^2-4a+17)n^2+(-24a^3-12a^2-60a-172)n+36a^4+72a^3+108a^2+408a+436}}{2(6a-2n+9)},
		\end{split}
	\end{equation*}
	and $\bar{\mu}_3(n)= 1$.
	
For $2a+7\leq n\leq 3a+2$ ($a\geq5$), one can see that $\bar{\mu}_{1}(n)>\bar{\mu}_{2}(n)$ and
\begin{equation*}
	\begin{split}
		&\bar{\mu}_{1}(n)\leq a+1\\
		&\Leftrightarrow \sqrt{(4a^2-4a+17)n^2+(-24a^3-12a^2-60a-172)n+36a^4+72a^3+108a^2+408a+436}\\
		&~~~~\leq 2(a+1)(6a-2n+9)-(18a-3n-2an+6a^2+16)\\
		&\Leftrightarrow \sqrt{(4a^2-4a+17)n^2+(-24a^3-12a^2-60a-172)n+36a^4+72a^3+108a^2+408a+436}\\
		&~~~~\leq 12a-(2a+1)n+6a^2+2\\
		&\Leftrightarrow (8a-16)n^2+(-48a^2 + 28a + 168)n + 72a^3 + 60a^2 - 360a - 432\geq0. 
	\end{split}
\end{equation*}


For $a\geq 5$, we have
\begin{equation*}
	\begin{split}
		&\frac{48a^2-28a-168}{2(8a-16)}>3a+2\\
		&\Leftrightarrow 36a-104>0\\
		&\Leftrightarrow a\geq 3.
	\end{split}
\end{equation*}

Thus, we get
\begin{equation*}
	\begin{split}
		&(8a-16)n^2+(-48a^2 + 28a + 168)n + 72a^3 + 60a^2 - 360a - 432\\
		&\geq(8a-16)(3a+2)^2+(-48a^2 + 28a + 168)(3a+2) + 72a^3 + 60a^2 - 360a - 432\\
		&=40a-160\geq 0.
	\end{split}
\end{equation*}

It's clear that $\varphi(\mu_{1,1}(n),n)<\varphi(a+1,n)\leq \varphi(\bar{\mu}_1,n)=0$. Hence, $f_3(\mu_{1,1}(n),n)>0$. Note that $\mu_{1,1}(n)>a+1>\mu_{3,2}=\frac{a}{2}+\frac{\sqrt{a^2-2a+5}}{2}+\frac{1}{2}$, we have $\mu_{1,1}(n)>\mu_{3,1}(n)$, implying $\rho(G_2(a,n-2a-7,a))>\rho(G_2(a-1,n-2a-5,a-1))$.

For $n= 3a+3$ ($a\geq4$), one can check that
\begin{equation*}
	\varphi(\mu,3a+3)=f_1(\mu,3a+3)-f_3(\mu,3a+3)=-3\mu^3+(3a+10)\mu^2+(-8a-5)\mu+5a-2.
\end{equation*}

Solving equation $\varphi(\mu,3a+3)=0$, we obtain
\begin{equation*}
	\begin{split}
		\bar{\mu}_{1}(3a+3)&=\frac{a}{2}+\frac{\sqrt{9a^2-18a+73}}{6}+\frac{7}{6},\\
		\bar{\mu}_{2}(3a+3)&=\frac{a}{2}-\frac{\sqrt{9a^2-18a+73}}{6}+\frac{7}{6},
	\end{split}
\end{equation*}
and
\begin{equation*}
	\bar{\mu}_{3}(3a+3)=1.
\end{equation*}

Consider the following inequality
\begin{equation*}
	\begin{split}
		&\bar{\mu}_{1}(3a+3)<\mu_{1,2}\\
		&\Leftrightarrow\frac{a}{2}+\frac{\sqrt{9a^2-18a+73}}{6}+\frac{7}{6}<\frac{a}{2}+\frac{\sqrt{a^2+4}}{2}+1\\
		&\Leftrightarrow \sqrt{9a^2-18a+73}<3\sqrt{a^2+4}-1\\
		&\Leftrightarrow \sqrt{a^2+4}<3(a-2)\\
		&\Leftrightarrow 2a^2-9a+8>0.
	\end{split}
\end{equation*}

It's easy to check that $2a^2-9a+8\geq4>0$. Then we have $\mu_{1,1}(3a+3)>\mu_{1,2}>\bar{\mu}_{1}(3a+3)$ and
\begin{equation*}
	\varphi(\mu_{1,1}(3a+3),3a+3)<\varphi(\bar{\mu}_{1}(3a+3),3a+3)=0.
\end{equation*}

Therefore, $f_3(\mu_{1,1}(3a+3),3a+3)>0$. Note that $\mu_{1,1}(3a+3)>a+1>\mu_{3,2}=\frac{a}{2}+\frac{\sqrt{a^2-2a+5}}{2}+\frac{1}{2}$, we have $\mu_{1,1}(3a+3)>\mu_{3,1}(3a+3)$, implying $\rho(G_2(a,a-4,a))>\rho(G_2(a-1,a-2,a-1))$.

For $n=3a+4$ ($a\geq3$), we get
\begin{equation*}
	\varphi(\mu,3a+4)=f_1(\mu,3a+4,)-f_3(\mu,3a+4)=-\mu^3+(a+5)\mu^2+(-4a-3)\mu+3a-1.
\end{equation*}

Solving equation $\varphi(\mu,3a+4)=0$, we have
\begin{equation*}
	\begin{split}
		\bar{\mu}_{1}(3a+4)&=\frac{a}{2}+\frac{\sqrt{a^2-4a+20}}{2}+2,\\
		\bar{\mu}_{2}(3a+4)&=\frac{a}{2}-\frac{\sqrt{a^2-4a+20}}{2}+2,
	\end{split}
\end{equation*}
and
\begin{equation*}
		\bar{\mu}_{3}(3a+4)=1.
\end{equation*}

By plugging the value $\bar{\mu}_{1}(3a+4)$ into $\mu$ of $f_1(\mu,3a+4)$, we get
\begin{equation*}
	f_1(\bar{\mu}_{1}(3a+4),3a+4)=\left( 2a^2-16a+\frac{87}{2}\right) \sqrt{a^2-4a+20}-\left( 2a^3-20a^2+\frac{183}{2}a-195\right) 
\end{equation*}

Note that $2a^3-20a^2+\frac{183}{2}a-195<0$ for $a=3,4$, $2a^3-20a^2+\frac{183}{2}a-195>0$ for $a\geq5$ and $2a^2-16a+\frac{87}{2}\geq11.5>0$. We only consider the case of $a\geq5$.

When $a\geq5$, one can see that
\begin{equation*}
	\begin{split}
		&\left( 2a^2-16a+\frac{87}{2}\right) \sqrt{a^2-4a+20}>\left( 2a^3-20a^2+\frac{183}{2}a-195\right)\\
		&\Leftrightarrow 48a^3 - 112a^2 + 276a - 180>0.
	\end{split}
\end{equation*}

Since $48a^3 - 112a^2 + 276a - 180\geq4400>0$ for $a\geq5$, we have $f_1(\bar{\mu}_{1}(3a+4),3a+4)>0$. One can check that $\bar{\mu}_{1}(3a+4)=\frac{a}{2}+\frac{\sqrt{a^2-4a+20}}{2}+2>\mu_{1,2}=\frac{a}{2}+\frac{\sqrt{a^2+4}}{2}+1>\bar{\mu}_{2}(3a+4)=\frac{a}{2}-\frac{\sqrt{a^2-4a+20}}{2}+2$. Thus, we obtain $\bar{\mu}_{1}(3a+4)>\mu_{1,1}(3a+4)>\mu_{1,2}>\bar{\mu}_{2}(3a+4)$ for $a\geq3$.

Hence, $\varphi(\mu_{1,1}(3a+4),3a+4)>0$. It can be checked that $f_3(\mu_{1,1}(3a+4),3a+4)<0$, implying $\mu_{1,1}(3a+4)<\mu_{3,1}(3a+4)$ for $a\geq3$. Hence, $\rho(G_2(a-1,a-1,a-1))>\rho(G_2(a,a-3,a))$.

For $n\geq 3a+5$ ($a\geq2$), we calculate that $6a-2n+9<0$, $\bar{\mu}_{2}(n)>\bar{\mu}_{1}(n)$, and
\begin{equation*}
	\begin{split}
		&\bar{\mu}_{2}(n)< a+1\\
		&\Leftrightarrow \sqrt{(4a^2-4a+17)n^2+(-24a^3-12a^2-60a-172)n+36a^4+72a^3+108a^2+408a+436}\\
		&~~~~<(18a-3n-2an+6a^2+16)-2(a+1)(6a-2n+9)\\
		&\Leftrightarrow \sqrt{(4a^2-4a+17)n^2+(-24a^3-12a^2-60a-172)n+36a^4+72a^3+108a^2+408a+436}\\
		&~~~~<(2a+1)n-6a^2-12a+2\\
		&\Leftrightarrow (8a-16)n^2+(-48a^2 + 28a + 168)n + 72a^3 + 60a^2 - 360a - 432>0.
	\end{split}
\end{equation*}

Since
\begin{equation*}
	\frac{48a^2-28a-168}{2(8a-16)}<3a+5\Leftrightarrow -12a-8<0,
\end{equation*}
we obtain
\begin{equation*}
	\begin{split}
		&~~~~(8a-16)n^2+(-48a^2 + 28a + 168)n + 72a^3 + 60a^2 - 360a - 432\\
		&\geq(8a-16)\cdot(3a+5)^2+(-48a^2 + 28a + 168)\cdot(3a+5) + 72a^3 + 60a^2 - 360a - 432\\
		&=4a+8>0.
	\end{split}
\end{equation*}

Then we have $\mu_{1,1}(n)>a+1>\bar{\mu}_{2}(n)$, implying $\varphi(\mu_{1,1}(n),n)>\varphi(\bar{\mu}_{2}(n),n)=0$. Thus, $f_{3}(\mu_{1,1}(n),n)<0$. It can be conclude that $\mu_{3,1}(n)>\mu_{1,1}(n)$ for $n\geq 3a+5$, i.e., $\rho(G_2(a-1,n-2a-5,a-1))>\rho(G_2(a,n-2a-7,a))$.

All in all, one can see that
\begin{equation*}
	\begin{cases}
		\rho(G_2(a,n-2a-7,a))>\rho(G_2(a-1,n-2a-5,a-1)), & \mbox{for } 2a+7\leq n\leq 3a+3, a\geq4, \\
		\rho(G_2(a,a-3,a))<\rho(G_2(a-1,a-1,a-1)), & \mbox{for } n=3a+4, a\geq3, \\
		\rho(G_2(a,n-2a-7,a))<\rho(G_2(a-1,n-2a-5,a-1)), & \mbox{for } n\geq3a+5, a\geq2.
	\end{cases}
\end{equation*}

For $n\geq 16$, we have
\begin{equation*}
	\begin{cases}
		\rho(G_2(a,n-2a-7,a))>\rho(G_2(a-1,n-2a-5,a-1)), & \mbox{for } \frac{n-3}{3}<a<\frac{n-7}{2},\\
		\rho(G_2(a,n-2a-7,a))<\rho(G_2(a-1,n-2a-5,a-1)), & \mbox{for } 4\leq a\leq\frac{n-4}{3}.
	\end{cases}
\end{equation*}

For $11\leq n\leq 15$, we have
\begin{equation*}
	\rho(G_2(a,n-2a-7,a))<\rho(G_2(a-1,n-2a-5,a-1)), \mbox{ for }  2\leq a\leq \dfrac{n-5}{3}<4.
\end{equation*}

For $n=3m$ ($m\geq4$), it follows that
\begin{equation*}
	\begin{split}
		&\rho\left( G_2\left( \lfloor\frac{3}{2}m-\frac{7}{2}\rfloor,3m-2\lfloor\frac{3}{2}m-\frac{7}{2}\rfloor-7,\lfloor\frac{3}{2}m-\frac{7}{2}\rfloor\right)  \right)>\dots>\rho(G_2(m-3,m-1,m-3))\\
		&>\rho(G_2(m-2,m-3,m-2))<\rho(G_2(m-1,m-5,m-1))<\dots<\rho(G_2(2,3m-11,2)).
	\end{split}
\end{equation*}

For $n=3m+1$ ($m\geq4$), it follows that
\begin{equation*}
	\begin{split}
		&\rho\left( G_2\left( \lfloor\frac{3}{2}m-3\rfloor,3m-2\lfloor\frac{3}{2}m-3\rfloor-6,\lfloor\frac{3}{2}m-3\rfloor\right)  \right)>\dots>\rho(G_2(m,m-6,m))\\
		&>\rho(G_2(m-1,m-4,m-1))<\rho(G_2(m-2,m-2,m-2))<\dots<\rho(G_2(2,3m-10,2)).
	\end{split}
\end{equation*}

For $n=3m+2$ ($m\geq3$), it follows that
\begin{equation*}
	\begin{split}
		&\rho\left( G_2\left( \lfloor\frac{3}{2}m-\frac{5}{2}\rfloor,3m-2\lfloor\frac{3}{2}m-\frac{5}{2}\rfloor-5,\lfloor\frac{3}{2}m-\frac{5}{2}\rfloor\right)  \right)>\dots>\rho(G_2(m,m-5,m))\\
		&>\rho(G_2(m-1,m-3,m-1))<\rho(G_2(m-2,m-1,m-2))<\dots<\rho(G_2(2,3m-9,2)).
	\end{split}
\end{equation*}

This completes the proof.
\end{proof}

\section{The minimizer graphs in $\mathbb{G}_{n,\lfloor \frac{n}{2}\rfloor}$}
In this section, we determine graphs with the minimum spectral radius in $\mathbb{G}_{n,\lfloor \frac{n}{2}\rfloor}$.

\begin{definition}
	The corona of two graph $H_1$ and $H_2$ is the graph $H=H_1\circ H_2$ formed from one copy of $H_1$ and $|V(H_1)|$ copies of $H_2$ where the $i$-th vertex of $H_1$ is adjacent to every vertex in the $i$-th copy of $H_2$.
\end{definition}

\begin{example}
	The corona $H\circ K_1$ is the graph constructed from a copy of $H$, where for each vertex $v\in V(H)$, a new vertex $v'$ and a pendent $vv'$ are added.
\end{example}

\begin{figure}[htbp]
	\centering 
	\includegraphics[width=7.5cm]{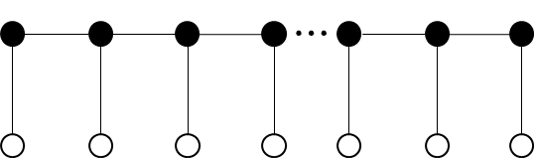}
	\caption{The graph $P_{\frac{n}{2}}\circ K_1$.}
\end{figure}

\begin{lemma}\cite{Finkdomin,Paydomin}\label{lemcirc}
	An $n$-vertex tree $T$ with domination number $\gamma$ satisfies $\gamma=\frac{n}{2}$ (for $n$ even) if and only if there exists a $\gamma$-vertex tree $H$ $(\gamma=\frac{n}{2})$ such that $T\cong H\circ K_1$. 
\end{lemma}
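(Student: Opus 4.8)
The plan is to prove the two implications separately; essentially all the content lies in the forward direction $\gamma(T)=\frac{n}{2}\Rightarrow T\cong H\circ K_1$, which I will establish by induction on the even integer $n$. The reverse direction is short: suppose $T\cong H\circ K_1$ for a tree $H$ on $\frac{n}{2}$ vertices, so $|V(T)|=n$; the copy of $V(H)$ inside $V(T)$ is a dominating set, giving $\gamma(T)\le\frac{n}{2}$, while the $\frac{n}{2}$ pendant vertices adjoined in forming $H\circ K_1$ have pairwise disjoint closed neighbourhoods — each consisting of one pendant together with its unique neighbour in $V(H)$ — so every dominating set must meet all $\frac{n}{2}$ of these pairs, giving $\gamma(T)\ge\frac{n}{2}$. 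Throughout the forward direction I will freely use the classical bound $\gamma(G)\le\frac{n}{2}$ valid for every graph without isolated vertices (in particular for trees on at least two vertices) and Fact~\ref{factset}.

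For the forward direction, the base cases $n=2$ (so $T=K_2=K_1\circ K_1$) and $n=4$ (the only $4$-vertex trees are $P_4$ and $K_{1,3}$, with $\gamma(K_{1,3})=1$, so $\gamma(T)=2$ forces $T=P_4=K_2\circ K_1$) are immediate. Let $n\ge 6$ with $\gamma(T)=\frac{n}{2}$, and assume the statement for all smaller even orders. Since $\gamma(T)\ge 3$, $T$ is not a star, hence has a longest path of length at least $3$; let $v_0$ be an end of it (a leaf), $v_1$ its support vertex, and $v_2,v_3$ the two vertices following $v_1$ on the path. I first verify that $v_0$ is the only leaf neighbour of $v_1$ and that $\deg_T(v_1)=2$: a second leaf neighbour at $v_1$ would let us delete $v_0$ without changing the domination number (both inequalities here follow from Fact~\ref{factset}, retaining $v_1$ in a minimum dominating set), contradicting $\gamma(T-v_0)\le\frac{n-1}{2}<\frac{n}{2}$; and then any neighbour of $v_1$ other than $v_0$ and $v_2$ would, by maximality of the path, have to be a leaf, which has just been excluded. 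Next — and this is the step that genuinely uses $\gamma(T)=\frac{n}{2}$ — I claim $\deg_T(v_2)\ge 3$: otherwise $v_0v_1v_2v_3$ is a pendant path with $v_3$ a cut vertex (since $n>4$), and deleting $v_0,v_1,v_2$ produces a tree $T''$ on $n-3$ vertices with $\gamma(T)\le\gamma(T'')+1\le\frac{n-3}{2}+1<\frac{n}{2}$, a contradiction.

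Now set $T'=T-\{v_0,v_1\}$, which is again a tree, on $n-2$ vertices. Adjoining $v_1$ to any dominating set of $T'$ dominates $T$, so $\gamma(T')\ge\gamma(T)-1$; together with $\gamma(T')\le\frac{n-2}{2}$ this forces $\gamma(T')=\frac{n-2}{2}=\frac{|V(T')|}{2}$, so the induction hypothesis applies and $T'\cong H'\circ K_1$ for some tree $H'$ on $\frac{n}{2}-1$ vertices. Because $\deg_{T'}(v_2)=\deg_T(v_2)-1\ge 2$, the vertex $v_2$ is not a pendant of $H'\circ K_1$, hence lies in the copy of $H'$. Let $H$ be obtained from $H'$ by adding one new vertex $v_1$ joined to $v_2$; then $H$ is a tree on $\frac{n}{2}$ vertices and $H\circ K_1$ — namely $T'$ with $v_1$, its pendant (which we rename $v_0$), and the edges $v_1v_2$ and $v_0v_1$ all adjoined — is exactly $T$. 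Since $|V(H)|=\frac{n}{2}=\gamma(T)$, the induction is complete.

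I expect the main difficulty to be precisely these two truncation steps. A direct attempt to surgically modify a minimum dominating set of $T$ is awkward, because $v_1$ may be forced to dominate both $v_0$ and $v_2$ simultaneously, so that deleting $v_1$ does not lower $\gamma$. The device above sidesteps this: one combines a cheap lower bound (put $v_1$ back into a dominating set of the smaller tree) with the extremal inequality $\gamma\le\frac{n}{2}$ on that smaller tree, which pins the domination number to decrease by exactly one; and the bound $\deg_T(v_2)\ge 3$ is exactly what guarantees that the decomposition of $T'$ supplied by the induction places $v_2$ on the ``$H$-side'', so that re-attaching the pendant path $v_0v_1$ yields an $H\circ K_1$ decomposition of all of $T$.
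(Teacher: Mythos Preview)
Your proof is correct. The paper does not prove this lemma at all: it is quoted as a known result from \cite{Finkdomin,Paydomin} and used as a black box, so there is no in-paper argument to compare against. Your inductive argument---peeling off a leaf $v_0$ and its support $v_1$ at the end of a longest path, using the Ore bound $\gamma\le n/2$ together with Fact~\ref{factset} to force $\deg_T(v_1)=2$ and $\deg_T(v_2)\ge3$, and then invoking the induction hypothesis on $T'=T-\{v_0,v_1\}$---is a clean, self-contained proof in the spirit of the original references. The only point worth tightening in exposition is the sentence ``contradicting $\gamma(T-v_0)\le\frac{n-1}{2}$'': make explicit that the two applications of Fact~\ref{factset} give $\gamma(T-v_0)=\gamma(T)=\frac{n}{2}$, which is what actually clashes with the Ore bound on $T-v_0$.
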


\begin{theorem}
	Let $n\geq2$ be even. For any graph $G\in\mathbb{G}_{n,\lfloor\frac{n}{2}\rfloor}$, then $\rho(G)\geq \rho(P_{\frac{n}{2}}\circ K_1)$ and equality holds if and only if $G\cong P_{\frac{n}{2}}\circ K_1$.
\end{theorem}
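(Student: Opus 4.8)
The plan is to reduce the problem, via the corona structure, to minimizing the spectral radius of a tree on $\frac{n}{2}$ vertices, and then invoke Lemma \ref{lemmintree}. First, by Theorem \ref{themtree} a minimizer $G$ over $\mathbb{G}_{n,\lfloor\frac{n}{2}\rfloor}$ is a tree, and since $n$ is even, $\lfloor\frac{n}{2}\rfloor=\frac{n}{2}$; hence by Lemma \ref{lemcirc} there is a tree $H$ on $\frac{n}{2}$ vertices with $G\cong H\circ K_1$. Conversely, for any tree $H$ on $\frac{n}{2}$ vertices the graph $H\circ K_1$ is connected with $n$ vertices and $n-1$ edges, hence a tree, and Lemma \ref{lemcirc} then gives $\gamma(H\circ K_1)=\frac{n}{2}$, so $H\circ K_1\in\mathbb{G}_{n,\lfloor\frac{n}{2}\rfloor}$. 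Thus it suffices to determine the tree $H$ on $\frac{n}{2}$ vertices that minimizes $\rho(H\circ K_1)$.

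Second, I would establish the identity
\[
\Phi(H\circ K_1,\lambda)=\lambda^{n/2}\,\Phi\!\left(H,\ \lambda-\tfrac{1}{\lambda}\right).
\]
Ordering the vertices of $H\circ K_1$ so that the $\frac{n}{2}$ vertices of $H$ come first and the $\frac{n}{2}$ pendant leaves come second (leaf $i$ adjacent to vertex $i$ of $H$), the adjacency matrix has the block form with diagonal blocks $A(H)$ and $0$ and off-diagonal blocks equal to the identity; applying the Schur-complement expansion to $\lambda I-A(H\circ K_1)$ relative to the invertible bottom-right block $\lambda I$ (valid for $\lambda\neq0$) yields the displayed identity, which then holds for all $\lambda$ by polynomial continuation. (This fact is also classical; alternatively one can grind it out from Lemma \ref{lemcalcu}.) Consequently every positive eigenvalue $\lambda$ of $H\circ K_1$ satisfies $\lambda-\frac{1}{\lambda}=\mu$ for some eigenvalue $\mu$ of $H$, i.e. $\lambda=\frac{\mu+\sqrt{\mu^2+4}}{2}$; since $x\mapsto\frac{x+\sqrt{x^2+4}}{2}$ is strictly increasing on $\mathbb{R}$ and the remaining (negative) roots are dominated by it, one gets
\[
\rho(H\circ K_1)=\frac{\rho(H)+\sqrt{\rho(H)^2+4}}{2}.
\]

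Third, because the right-hand side is a strictly increasing function of $\rho(H)$, minimizing $\rho(H\circ K_1)$ over trees $H$ on $\frac{n}{2}$ vertices is equivalent to minimizing $\rho(H)$ over the same family. By Lemma \ref{lemmintree} the path $P_{n/2}$ is the minimizer of the spectral radius among all connected graphs on $\frac{n}{2}$ vertices, and it is the unique one: a connected non-tree on $\frac{n}{2}$ vertices properly contains a spanning tree, so has strictly larger spectral radius by Lemma \ref{lemsub}, while a non-path tree has a branching vertex and repeated application of Lemma \ref{lemlong} strictly decreases the spectral radius. Hence $H\cong P_{n/2}$, so $G\cong P_{n/2}\circ K_1$, with $\rho(G)\ge\rho(P_{n/2}\circ K_1)$ and equality only in this case; explicitly $\rho(P_{n/2}\circ K_1)=\frac{\rho(P_{n/2})+\sqrt{\rho(P_{n/2})^2+4}}{2}$ with $\rho(P_{n/2})=2\cos\frac{\pi}{\frac{n}{2}+1}$.

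The only real obstacle is the clean derivation of the corona eigenvalue identity and checking that it faithfully transfers the uniqueness of the path minimizer; once that is in hand, the conclusion is immediate from Theorem \ref{themtree}, Lemma \ref{lemcirc}, and Lemma \ref{lemmintree}. A minor point to verify separately is the degenerate case $n=2$, where $H=P_1$ and $H\circ K_1=K_2$, for which the identity and the conclusion hold trivially.
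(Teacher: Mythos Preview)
Your proof is correct and follows essentially the same route as the paper: reduce to trees via Theorem~\ref{themtree}, invoke Lemma~\ref{lemcirc} to get the corona form $H\circ K_1$, derive the characteristic-polynomial identity from the block structure of $A(H\circ K_1)$, conclude that $\rho(H\circ K_1)$ is strictly increasing in $\rho(H)$, and finish with Lemma~\ref{lemmintree}. Your version is in fact slightly more careful than the paper's in two places---you have the correct factor $\lambda^{n/2}$ (the paper writes $\lambda$), and you supply the uniqueness of $P_{n/2}$ explicitly via Lemmas~\ref{lemsub} and~\ref{lemlong} rather than reading it into Lemma~\ref{lemmintree}.
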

\begin{proof}
	By Theorem \ref{themtree}, the minimizer graph in $\mathbb{G}_{n,\frac{n}{2}}$ must be a tree.
	From Lemma \ref{lemcirc}, a tree $T$ with $\gamma=\frac{n}{2}$ if and only if $T\cong H\circ K_1$ for some tree $H$ on $\frac{n}{2}$ vertices. For any such $H$, $A(H\circ K_1)$ has the form
	\begin{equation*}
		\begin{bmatrix}
			A(H) & I_{\frac{n}{2}}\\
			I_{\frac{n}{2}} & 0
		\end{bmatrix},
	\end{equation*} 
	where $I_{\frac{n}{2}}$ is an identity matrix of order $\frac{n}{2}$.
	
	\begin{equation*}
		\begin{split}
			\Phi(H\circ K_1,\lambda)&=|\lambda I_{n}-A(H\circ K_1)|\\
			&=\left|\begin{matrix}
				\lambda I_{\frac{n}{2}}-A(H), & -I_{\frac{n}{2}} \\
				-I_{\frac{n}{2}} & \lambda I_{\frac{n}{2}}
			\end{matrix}\right|\\
		&=\left|\begin{matrix}
			\left(\lambda-\frac{1}{\lambda} \right) I_{\frac{n}{2}}-A(H) & -I_{\frac{n}{2}}\\
			0 & \lambda I_{\frac{n}{2}}
		\end{matrix}\right|\\
	&=\lambda|I_{\frac{n}{2}}|\cdot\left|\left( \lambda-\frac{1}{\lambda}\right)I_{\frac{n}{2}}-A(H) \right|\\
	&=\lambda\left|\left( \lambda-\frac{1}{\lambda}\right)I_{\frac{n}{2}}-A(H) \right|\\
	&=\lambda\Phi\left( H,\lambda-\frac{1}{\lambda}\right) 
		\end{split}
	\end{equation*}

	Let $\sigma=\lambda-\frac{1}{\lambda}$, then we have $\Phi(H\circ K_1,\lambda)=\lambda\Phi(H,\sigma)$ and $\lambda=\frac{\sigma\pm\sqrt{\sigma^2+4}}{2}$. 
	
	Let $\lambda_{1}$ and $\sigma_{1}$ be the largest roots of polynomials $\Phi(H\circ K_1,\lambda)$ and $\Phi(H,\sigma)$, respectively. Note that $H$ is a proper graph of $H\circ K_1$. It follows that $\lambda_1>\sigma_{1}$. It can be conclude that $\lambda_{1}=\frac{\sigma_1+\sqrt{\sigma_{1}^{2}+4}}{2}$. Therefore, the spectral radius of $H\circ K_1$ is increasing in the spectral radius of $H$. Combining Lemma \ref{lemmintree}, we have $G\cong P_{\frac{n}{2}}\circ K_1$ minimizers the spectral radius over graphs on $n$ vertices domination number $\gamma$.
\end{proof}
	
	If $n$ is odd, let $H'$ be a tree obtained from $P_{\lfloor\frac{n}{2}\rfloor}\circ K_1$ by once subdivision of a pendent edge of the diameter of $P_{\lfloor\frac{n}{2}\rfloor}\circ K_1$. The following conclusion does not seem so easy to prove.
\begin{conjecture}
	Let $n\geq3$ be odd. For any graph $G\in\mathbb{G}_{n,\lfloor\frac{n}{2}\rfloor}$, then $\rho(G)\geq \rho(H)$ and equality holds if and only if $G\cong H'$.
\end{conjecture}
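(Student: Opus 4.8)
The plan is to push the even-case argument as far as it will go and then confront the two places where the odd case is genuinely harder. By Theorem~\ref{themtree} a minimizer in $\mathbb{G}_{n,\lfloor n/2\rfloor}$ is a tree, so the task is to minimize $\rho$ over $n$-vertex trees $T$ (with $n$ odd) satisfying $\gamma(T)=\tfrac{n-1}{2}$. The first step is a structural description of this family playing the role of Lemma~\ref{lemcirc}: one expects, and something of this shape is available in the domination literature that classifies trees with $\gamma=\lfloor n/2\rfloor$, that for $n$ large every such $T$ is obtained from a corona $H\circ K_1$ on $n-1$ vertices ($H$ a tree on $\tfrac{n-1}{2}$ vertices) by a single local move --- subdividing one edge once, or attaching one new pendent edge --- so that $T$ is a ``core corona'' plus exactly one extra vertex sitting in a tightly constrained position; the small orders, where $T$ may be a short path and where $H'\cong P_n$, are handled separately.

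Granting such a description, the second step is the spectral optimization within this family. For the corona core I would reuse the identity $\Phi(H\circ K_1,\lambda)=\lambda\,\Phi\!\left(H,\lambda-\tfrac1\lambda\right)$ established in the even-case proof: it shows $\rho(H\circ K_1)$ is strictly increasing in $\rho(H)$, and combined with Lemma~\ref{lemmintree} this forces the backbone $H$ of a minimizer to be the path $P_{(n-1)/2}$. It then remains to pin down the single extra vertex along the comb $P_{(n-1)/2}\circ K_1$: I would use Lemma~\ref{lemPero} (comparing Perron weights, which fall off toward the ends of a caterpillar) to slide the extra pendent vertex to an end of the comb, Lemma~\ref{lemsymm} to break the tie between the two symmetric ends, and Lemma~\ref{lemlong} together with Lemma~\ref{lemsubdiv} to prefer stretching the end tooth to length~$2$ over creating a double-leaf support there. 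The few configurations that survive these reductions form a short list whose characteristic polynomials can be compared directly via Lemma~\ref{lemcalcu}, exactly in the style of Section~4, and the comparison should single out $H'$ as the unique minimizer.

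The hard part --- and the reason this is only conjectured --- is the interaction of these steps with the domination number. Unlike the even case, where $T\cong H\circ K_1$ is an exact dichotomy, the odd-order trees with $\gamma=\tfrac{n-1}{2}$ form a looser family, so even making Step~1 precise takes work. More seriously, the spectral transformations that strictly decrease $\rho$ in Step~2 --- subdividing an internal or pendent edge, relocating a pendent path, balancing two branches --- need not preserve $\gamma=\tfrac{n-1}{2}$: the very move that lowers $\rho$ can push the domination number down to $\tfrac{n-1}{2}-1$, taking the graph out of $\mathbb{G}_{n,\lfloor n/2\rfloor}$, or, run the other way, force a detour through trees whose domination number is momentarily wrong. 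The crux is therefore to pair each $\rho$-decreasing reduction with a compensating move on the extra vertex that restores $\gamma$ while still lowering $\rho$; once that bookkeeping is under control, Steps~2 and~3 should proceed essentially as in the even case and in Section~4.
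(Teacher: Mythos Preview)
The paper does not prove this statement: it is explicitly stated as a \emph{Conjecture}, preceded by the sentence ``The following conclusion does not seem so easy to prove.'' There is therefore no proof in the paper to compare your proposal against.

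Your outline is a sensible plan of attack rather than a proof, and you already identify the two genuine obstacles correctly. First, there is no clean odd-order analogue of Lemma~\ref{lemcirc}: the trees $T$ on $n=2k+1$ vertices with $\gamma(T)=k$ are not simply ``corona plus one extra vertex,'' and a precise structural characterization of this family would need to be either found in the literature or proved from scratch before your Step~1 can even begin. Second, and more seriously, the monotonicity trick $\Phi(H\circ K_1,\lambda)=\lambda\,\Phi(H,\lambda-\tfrac{1}{\lambda})$ that drives the even case no longer applies once the extra vertex is present, so the reduction to $H\cong P_{(n-1)/2}$ is not automatic; you would have to argue simultaneously about the backbone and the location of the extra vertex, and as you note, the $\rho$-decreasing moves from Lemmas~\ref{lemlong}, \ref{lemPero}, \ref{lemsubdiv}, and \ref{lemsymm} can change $\gamma$. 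Until those two issues are resolved, what you have is a strategy, not a proof --- which is exactly the status the paper assigns to the statement.
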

\begin{figure}[htbp]
	\centering 
	\includegraphics[width=8cm]{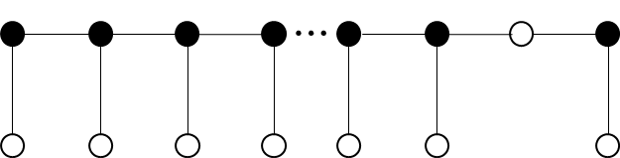}
	\caption{The graph $H'$.}
	\label{conject}
\end{figure}

\section*{Declaration of competing interest}
The authors declare that they have no known competing financial interests or personal relationships that could have appeared to influence the work reported in this paper.

\section*{Acknowledgement(s)}
This work was supported by the National Natural Science Foundation
of China (Grant No. 61773020). The authors would like to express their sincere gratitude to all the referees for their careful reading and insightful suggestions.


\begin{thebibliography}{99}
%
%
%
%
%
%
%
%
%
%
%
%
%
%
%
\bibitem{Bercut} A. Berman, X.D. Zhang, On the spectral radius of graphs with cut vertices, \textit{J. Combin. Theory Ser. B}. 83  (2001) 233-240.

\bibitem{BroSpec} A.E. Brouwer, W.H. Haemers, Spectra of Graphs - Monograph, Springer, 2011.

\bibitem{CveSpec} D.M. Cvetkovi\'c, M. Doob, H. Sachs, Spectra of Graphs–Theory and Application. Deutscher Verlag der Wissenschaften–Academic Press, Berlin–New York, 1980; second ed. 1982; third ed., Johann Ambrosius Barth Verlag, Heidelberg–Leipzig, 1995.

\bibitem{Danlem1} P. Dankelmann, Average distance and domination number, \textit{Discrete Appl. Math}. 80 (1997) 21–35.

\bibitem{Delalem1} E. DeLaVi\~na, Q. Liu, R. Pepper, B. Waller, D. West, Some conjectures of Graffiti.pc on total domination, \textit{Congr. Numer}. 185 (2007) 81–95.

\bibitem{Stevdomin} D. Stevanovi\'c, M. Aouchiche, P. Hansen, On the spectral radius of graphs with a given domination number,  \textit{Linear Algebra Appl}. 428 (2008) 1854-1864.

\bibitem{Finkdomin} J.F. Fink, M.S. Jacobson, L.F. Kinch, J. Roberts, On graphs having domination number half their order, \textit{Period. Math. Hungar}. 16 (1985) 287–293.

\bibitem{Guandiom} T.T. Guan, M.L. Ye, A Lower bound on spectral radius of graphs
with given domination number $2$, Journal of Anqing Teachers College(Natural Science Edition).  17 (2011) 13-15. (in Chinese).

\bibitem{Guodia} J.M. Guo, J.Y. Shao,  On the spectral radius of trees with fixed diameter, \textit{Linear Algebra Appl}. 413 (2006) 131-147.

\bibitem{Hoff} A.J. Hoffman, J.H. Smith, in: Fiedler (Ed.), Recent Advances in Graph Theory, Academic, Praha, 1975, pp. 273–281.


\bibitem{Jiinde} C. Ji, M. Lu, On the spectral radius of trees with given independence number, \textit{Linear Algebra Appl}. 488 (2016) 102-108.

\bibitem{Mingmatch} G.J. Ming, T.S. Wang, On the spectral radius of trees, \textit{Linear Algebra Appl}. 329 (2001) 1-8.

\bibitem{Lieig} Q. Li, K.Q. Feng, On the largest eigenvalues of graphs, \textit{Acta Math. Appl. Sinica}. 2 (1979) 167–175. (in Chinese).

\bibitem{Linmatch} W. Lin, X. Guo, On the largest eigenvalues of trees with perfect matchings. \textit{J. Math. Chem}. 42 (2007) 1057-1067.

\bibitem{Linorder} W.S. Lin, X.F. Guo, Ordering trees by their largest eigenvalues, \textit{Linear Algebra Appl}. 418 (2006) 450–456.

\bibitem{Liudegree} C.A. Liu, C.W. Weng, Spectral radius and degree sequence of a graph, \textit{Linear Algebra Appl}. 438 (2013) 3511-3515.

\bibitem{Louinde} Z. Lou, J.M. Guo, The spectral radius of graphs with given independence number, \textit{Discrete Math}. 345 (2022) 112778.

\bibitem{Xuinde} M. Xu, Y. Hong, J. Shu, M. Zhai, The minimum spectral radius of graphs with a given independence number, \textit{Linear Algebra Appl}. 431 (2009) 937-945.

\bibitem{Paydomin} C. Payman, N.H. Xuong, Domination-balanced graphs, \textit{J. Graph Theory}. 6 (1982) 23–32.

\bibitem{Vandia} E.R. van Dam, R.E. Kooij, The minimal spectral radius of graphs with a given diameter, \textit{Linear Algebra Appl}. 423 (2007) 408-419.

\bibitem{Wupend} B. Wu, E. Xiao, Y. Hong, The spectral radius of trees on $k$ pendant vertices, \textit{Linear Algebra Appl}. 395 (2005) 343-349.

%
%
%
%
%
%
%
%
%
%
%
%
%
%
%
%
%
%
%
%
%
%



\end{thebibliography}
\end{document}